\newtheorem{theorem}{\sc Theorem}[section]
\newtheorem{lemma}[theorem]{\sc Lemma}
\begin{document}
\title[Centralizers of commutators]{Centralizers of commutators in finite groups}

\thanks{The first and second authors are members of GNSAGA (INDAM), 
and the third author was  supported by  FAPDF and CNPq.}

\author{Eloisa Detomi}
\address{Dipartimento di Ingegneria dell'Informazione - DEI, Universit\`a di Padova, Via G. Gradenigo 6/B, 35121 Padova, Italy} 
\email{eloisa.detomi@unipd.it}
\author{Marta Morigi}
\address{Dipartimento di Matematica, Universit\`a di Bologna\\
Piazza di Porta San Donato 5 \\ 40126 Bologna \\ Italy}
\email{marta.morigi@unibo.it}
\author{Pavel Shumyatsky}
\address{Department of Mathematics, University of Brasilia\\
Brasilia-DF \\ 70910-900 Brazil}
\email{pavel@unb.br}
%\thanks{ }

\date{Received: date / Accepted: date}
\subjclass[2010]{20E45, 20F24, 20F14} %	20E45 Conjugacy classes for groups,// 20F24	FC-groups and their generalizations//20F14 Derived series, central series, and generalizations for groups // 20F19  	Generalizations of solvable and nilpotent groups// 20F45  	Engel conditions// 20F18  	Nilpotent groups [See also 20D15]
\keywords{Commutators, centralizers, conjugacy classes}

\maketitle

\begin{abstract} Let $G$ be a finite group. A coprime commutator in $G$ is any element that can be written as a commutator $[x,y]$ for suitable $x,y\in G$ such that $\pi(x)\cap\pi(y)=\emptyset$. Here $\pi(g)$ denotes the set of prime divisors of the order of the element $g\in G$. An anti-coprime commutator is an element that can be written as a commutator $[x,y]$, where $\pi(x)=\pi(y)$. The main results of the paper are as follows.

\noindent If $|x^G|\leq n$ whenever $x$ is a coprime commutator, then $G$ has a nilpotent subgroup of $n$-bounded index. 

\noindent If $|x^G|\leq n$ for every anti-coprime commutator $x\in G$, then $G$ has a subgroup $H$ of nilpotency class at most $4$ such that $[G : H]$ and $|\gamma_4 (H)|$ are both $n$-bounded.

\noindent We also consider finite groups in which the centralizers of coprime, or anti-coprime, commutators are of bounded order.
 \end{abstract}
\section{Introduction}
 
In recent years groups containing many elements lying in conjugacy classes of bounded size have attracted significant interest. In particular, such groups play an important role in the study of commuting probability in finite groups (cf. \cite{dieshu,dmsqjm,as,EloP,eber}). Given a group $G$ and an element $x\in G$, we write $x^G$ for the conjugacy class containing $x$. Of course, the cardinality of $x^G$ is equal to the index $[G:C_G(x)]$. It is a classical result of B. H. Neumann that if $n$ is a positive integer and $|x^G|\leq n$ for all $x\in G$, then the commutator subgroup $G'$ is finite and has $n$-bounded order \cite{bhn}. Throughout this paper we say that a number is $n$-bounded to mean that it is bounded in terms of $n$ only.

An element $x$ of a group $G$ is called a commutator if it can be written as $x=[a,b]=a^{-1}b^{-1}ab$ for suitable $a,b\in G$. The main result of \cite{dieshu} is that if $|x^G|\leq n$ for all commutators in a group $G$, then the second commutator subgroup $G''$ is finite and has $n$-bounded order. The use of probabilistic techniques in \cite{eber} led to further progress in the study of such groups. In particular it was established that if $|x^G|\le n$ for every commutator, then $G$ has a subgroup $H$ of nilpotency class at most $4$ such that the index $[G:H]$ and the order $|\gamma_4(H)|$ are both finite and bounded. 

Let $G$ be a finite group. By a coprime  commutator in $G$ we mean an  element of the form $[x,y]$, where $x$ and $y$ have coprime orders. It is well-known that in any finite group $G$ the coprime commutators generate the nilpotent residual $\gamma_\infty (G)$, that is, the (unique) minimal normal subgroup $N$ such that $G/N$ is nilpotent. In this paper we take interest in finite groups with bounded conjugacy classes containing coprime commutators.

\begin{theorem}\label{main1}
Let $G$ be a finite group in which $|x^G|\leq n$ whenever $x$ is a coprime commutator. Then $G$ has a nilpotent subgroup of $n$-bounded index. 
\end{theorem}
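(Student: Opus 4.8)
The plan is to locate a \emph{normal} nilpotent subgroup of $G$ of $n$-bounded index; equivalently (since every normal nilpotent subgroup lies in the Fitting subgroup, while the core of a nilpotent subgroup of bounded index is a normal nilpotent subgroup of bounded index), to bound $[G:F(G)]$. It is convenient to work with $N:=\gamma_\infty(G)$ and its centraliser.

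First I would record a preliminary observation: $C_G(N)$ is always nilpotent. Indeed $G/N$ is nilpotent, so $C_G(N)/(C_G(N)\cap N)\cong C_G(N)N/N$ is nilpotent and hence $\gamma_\infty(C_G(N))\le C_G(N)\cap N = Z(N)$; since $Z(N)$ is centralised both by $N$ and by $C_G(N)$ we get $Z(N)\le Z(C_G(N))$, so $\gamma_\infty(C_G(N))\le Z(C_G(N))$, which forces $C_G(N)$ to be nilpotent. Consequently the theorem is equivalent to the assertion that $[G:C_G(N)]$ is $n$-bounded; writing $\bar G:=G/C_G(N)$, which acts faithfully on $N$, we must bound $|\bar G|$ in terms of $n$. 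The data to exploit are: $N$ is generated by coprime commutators (recalled in the Introduction), and for every coprime commutator $w$ one has $w\in N$ and $|w^{\bar G}|=|w^G|\le n$ (because $C_G(N)\le C_G(w)$). Thus the set $W$ of coprime commutators is a conjugation-invariant generating set of $N$ all of whose $\bar G$-orbits have size at most $n$.

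Next I would use the classification of finite simple groups to reduce to the case where $N$ is solvable. If $N$ had large non-abelian composition factors one could exhibit a coprime commutator with a large conjugacy class: for a non-abelian chief factor $S^{k}$ of $G$ inside $N$, lifting a non-trivial coprime commutator $s=[s_1,s_2]$ of $S$ to $(s,1,\dots,1)=[(s_1,1,\dots,1),(s_2,1,\dots,1)]$ produces a coprime commutator of $G$ whose $\bar G$-orbit has size at least $k$, forcing $k$ to be $n$-bounded; and largeness of $S$ itself is excluded because some element in a large conjugacy class of a large simple group is a coprime commutator. Hence the non-abelian part of $N$ has $n$-bounded order and can be split off, leaving the solvable case, to which one passes by induction (in the quotient by a minimal normal subgroup of $G$ contained in $N$ the hypotheses persist, since images of coprime commutators are coprime commutators and, in solvable groups, every coprime commutator of a quotient lifts, via Hall subgroups, to one of $G$).

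It then remains to treat solvable $N$, which I would do by induction on the derived length, the essential case being $N$ abelian. Here a decisive simplification is available: since $N$ is abelian, $N\le C_G(N)$, so $\bar G$ is a quotient of the nilpotent group $G/N$ and is therefore itself nilpotent. Writing $N=\bigoplus_p N_p$ and $\bar G=\prod_r\bar G_r$, one must bound each $r$-group $\bar G_r$; it acts coprimely on $N_{r'}$ and by unipotent automorphisms on $N_r$, with the relevant generators of $N$ coming from genuine coprime commutators of $G$. A useful first remark is that each submodule $\langle w^{\bar G}\rangle$ is spanned by the at most $n$ vectors of $w^{\bar G}$, so $\bar G$ induces on it a group of order at most $n!$. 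The main obstacle — which I expect to be the technical core of the paper — is to convert such \emph{local} information (every coprime commutator has at most $n$ conjugates; every orbit-generated submodule affords a bounded quotient of $\bar G$) into a \emph{global} bound on $|\bar G|$: a priori these facts only show that $\bar G$ is residually of $n$-bounded order, which is far too weak, and the resolution must use the generation of $\gamma_\infty(G)$ by coprime commutators in a genuinely quantitative way — through coprime-action and Clifford-theoretic arguments, and careful bookkeeping over the primes dividing $|N|$ — rather than merely the fact that $W$ is $\bar G$-invariant. Once $|\bar G|$ is bounded when $N$ is abelian, the inductive step together with the reductions above completes the proof.
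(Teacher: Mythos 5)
Your proposal does not prove the theorem: it stalls exactly at the step that constitutes the actual content of the result, and you say so yourself. After the (correct) observation that $C_G(\gamma_\infty(G))$ is nilpotent, so that it suffices to bound $[G:C_G(N)]$ for $N=\gamma_\infty(G)$, and after a plausible (if sketchy) reduction to the case where $N$ is soluble and then abelian, you arrive at a group $\bar G$ acting faithfully on an abelian group $N$ generated by a conjugation-invariant set $W$ of coprime commutators with $|w^{\bar G}|\le n$ for all $w\in W$ --- and there you only record that each submodule $\langle w^{\bar G}\rangle$ affords a quotient of $\bar G$ of order at most $n!$, correctly note that this alone makes $\bar G$ merely residually of bounded order, and defer the passage from local to global to unspecified ``coprime-action and Clifford-theoretic arguments.'' That passage \emph{is} the theorem; a proof cannot leave it as a black box.

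What is missing is a quantitative input of the type the paper imports as Theorem \ref{qjm} (from \cite{qjm2021}): if $|[g,a]^G|\le n$ for all $g\in G$ and $a\in A$, then $[G,A]'$ has $n$-bounded \emph{order}. The paper's key step (Lemma \ref{key}) takes $G=MH$ with $M$ normal and $(|M|,|H|)=1$, uses Theorem \ref{qjm} to kill $M'$, uses the Aivazidis--Isaacs bound to reduce to $H$ abelian, and then observes that with $M$ and $H$ both abelian \emph{every} commutator of $G$ is a product of two coprime commutators, so that the Eberhard--Shumyatsky theorem from \cite{eber} applies and bounds $[G:F(G)]$, whence $|H|=|H/(H\cap F(G))|$ is bounded. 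Nothing in your outline plays the role of either external theorem, and the elementary facts you do marshal (bounded orbits on a generating set) genuinely do not suffice, since they hold equally in situations where only residual bounds follow. The surrounding reductions in your plan (the CFSG-flavoured treatment of nonabelian chief factors, the lifting of coprime commutators to quotients) are repairable --- the paper handles them more cleanly with Feit--Thompson, the Hall--Higman theory and an exponent argument for $G/F(G)$, followed by induction on the Fitting height --- but the core of the argument is absent.
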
 

Remark that under the hypotheses of the above theorem the order of $\gamma_\infty(G)$ can be arbitrarily large. This can be checked by looking at the dihedral groups of order $2t$, where $t$ is odd. Fairly detailed information on the structure of $\gamma_\infty(G)$ will be provided by Theorem \ref{tec} in Section \ref{sec:3}. 

We say that an element $x$ of a finite group is an anti-coprime commutator if $x=[a,b]$ for suitable $a,b\in G$ such that $\pi(a)=\pi(b)$. Here $\pi(g)$ denotes the set of prime divisors of the order of the element $g\in G$.

Note that the subgroup generated by the set of anti-coprime commutators of $G$ is precisely the commutator subgroup of $G$. Indeed, let $N$ be the subgroup generated by all anti-coprime commutators of $G$. Obviously $P'\leq N$ for any Sylow subgroup $P$ of $G$. Further, note that $[x,y,y]=[y^{-x},y]^y$ is an anti-coprime commutator for any $x,y\in G$. Therefore the quotient $G/N$ is nilpotent (see e.g. \cite[12.3.6]{rob}) and has abelian Sylow subgroups. Hence, $G/N$ is abelian and $N=G'$. 

Theorem \ref{main1} happens to be useful in tackling groups with bounded conjugacy classes of anti-coprime commutators. It is somewhat surprising that the conclusion in the next theorem is as strong as in the aforementioned result from \cite{eber}.

\begin{theorem}\label{main-anti} Let $G$ be a finite group in which $|x^G|\leq n$ whenever $x$ is an anti-coprime commutator. Then $G$ has a
subgroup $H$ of nilpotency class at most $4$ such that $[G : H]$ and $|\gamma_4 (H)|$ are both $n$-bounded.
\end{theorem}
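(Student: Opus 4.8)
The plan is to reduce everything to Theorem \ref{main1} together with the cited result from \cite{eber}, namely that if all \emph{commutators} lie in classes of size at most $n$ then $G$ has a subgroup of nilpotency class at most $4$ with $[G:H]$ and $|\gamma_4(H)|$ both $n$-bounded. The obvious difficulty is that the hypothesis only controls anti-coprime commutators, so I cannot apply \cite{eber} directly; instead I must produce a characteristic (or at least normal) subgroup of $n$-bounded index inside which \emph{every} commutator is automatically anti-coprime, or close enough to be handled. First I would observe that every element of a Sylow $p$-subgroup is a $p$-element, so every commutator $[a,b]$ with $a,b$ in a common Sylow subgroup $P$ is an anti-coprime commutator; hence for each prime $p$ and each Sylow $p$-subgroup $P$ we get $|x^G|\le n$ for all $x\in P'$. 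More usefully, for arbitrary $a,b\in G$ the element $[a,b,b]=[b^{-a},b]^b$ is an anti-coprime commutator (this is exactly the identity already used in the excerpt to show the anti-coprime commutators generate $G'$), and likewise $[a,b,a]$ is one after conjugation; so a large supply of ``weight-$3$'' commutators have classes of size at most $n$.

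The key step is to pass to $\gamma_\infty(G)$. By Theorem \ref{main1}, since in particular every coprime commutator is a product of... — no: coprime commutators need not be anti-coprime, so I cannot invoke \ref{main1} as a black box on $G$. Instead I would argue as follows. Let $R=\gamma_\infty(G)$ be the nilpotent residual. For each prime $p$, writing $G_p$ for a Sylow $p$-subgroup, the subgroup $G'$ is generated by the $G_p'$ together with elements $[x,p\text{-part},\dots]$; the relevant structural fact (see \cite[12.3.6]{rob}, already invoked) is that modulo the subgroup generated by all $[a,b,b]$ the group $G/G'$ has abelian Sylows. I would therefore first show that $G$ has a normal subgroup $K$ of $n$-bounded index such that $K'$ consists entirely of anti-coprime commutators, or such that $\gamma_\infty(K)$ has $n$-bounded order. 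The route: apply Theorem \ref{main1} to get a nilpotent subgroup $L\le G$ with $[G:L]=m$ $n$-bounded; replace $L$ by its normal core $L_0$, still of $n$-bounded index, which is nilpotent; then $\gamma_\infty(G)\le$ has restricted interaction with $L_0$, and one analyzes $G/L_0$, a group of $n$-bounded order, lifting a Sylow tower. Concretely, since $L_0$ is nilpotent and normal, $G=L_0 C_G(L_0)\cdots$ is not automatic, but $[G:L_0]$ bounded means $|\gamma_\infty(G)|$ is controlled by its image in $G/L_0$ times $\gamma_\infty(L_0)=1$; so $\gamma_\infty(G)$ has $n$-bounded order. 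This is the crucial gain: \textbf{$|\gamma_\infty(G)|$ is $n$-bounded}.

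Once $R=\gamma_\infty(G)$ has $n$-bounded order, work in $\bar G=G/R$, which is nilpotent. In a nilpotent group every element's set of prime divisors is inherited componentwise from the Sylow decomposition, and one checks that for $a,b\in\bar G$ one can adjust $a$ and $b$ by elements of the (bounded) relevant parts so that $[a,b]$ becomes anti-coprime modulo a bounded correction; more precisely, since $\bar G$ is the direct product of its Sylow subgroups, a commutator $[\bar a,\bar b]$ in $\bar G$ decomposes as a product over primes $p$ of $[\bar a_p,\bar b_p]$, each of which is anti-coprime. Pulling back through $R$ (of bounded order) shows that every commutator of $G$ lies in a class of $n$-bounded size: for $x=[a,b]\in G$, the commutators of the $p$-parts modulo $R$ lift to anti-coprime commutators up to elements of $R$, and multiplying the at most $|R|$-bounded number of resulting conjugacy-class data together gives $|x^G|\le f(n)$. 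Hence the hypothesis of the theorem from \cite{eber} holds with $n$ replaced by an $n$-bounded quantity, and that theorem delivers the desired $H$ with nilpotency class $\le 4$, $[G:H]$ and $|\gamma_4(H)|$ both $n$-bounded. The main obstacle is the second paragraph: showing $|\gamma_\infty(G)|$ is $n$-bounded. Theorem \ref{main1} only gives a nilpotent subgroup of bounded index, not a bound on $|\gamma_\infty(G)|$ (indeed the dihedral example shows $|\gamma_\infty(G)|$ is unbounded under the \emph{coprime}-commutator hypothesis alone); so here the stronger anti-coprime hypothesis must be exploited — e.g. every Sylow subgroup $P$ has $|x^G|\le n$ for all $x\in P'$, forcing (via \cite{bhn} applied inside normalizers, or via \cite{dieshu}) strong restrictions on $P'$, and one combines these across primes using coprime action to bound $\gamma_\infty(G)$. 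Making that combination precise, and in particular controlling how the nilpotent residual is assembled from the Sylow data, is where the real work lies.
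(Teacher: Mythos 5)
Your central claim --- that $|\gamma_\infty(G)|$ is $n$-bounded --- is false, and the third paragraph of your argument collapses with it. The dihedral group $D_{2t}$ with $t$ odd satisfies the anti-coprime hypothesis with $n=2$: the only candidates for nontrivial anti-coprime commutators are commutators of two reflections, which are rotations, and every nontrivial rotation has a conjugacy class of size $2$; yet $\gamma_\infty(D_{2t})$ is the cyclic subgroup of order $t$. So the stronger anti-coprime hypothesis does not rescue the obstruction you yourself flag at the end. The specific error in your second paragraph is the assertion that a nilpotent normal subgroup $L_0$ of bounded index forces $|\gamma_\infty(G)|$ to be controlled by its image in $G/L_0$ ``times $\gamma_\infty(L_0)=1$'': the intersection $\gamma_\infty(G)\cap L_0$ is in general much larger than $\gamma_\infty(L_0)$ (in $D_{2t}$ it is all of $L_0$). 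A second problem in the same paragraph is that you invoke Theorem \ref{main1} to produce $L$ immediately after correctly observing that you are not entitled to it, since coprime commutators need not be anti-coprime and the hypothesis of Theorem \ref{main1} has not been verified at that point.

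The two ideas you are missing are exactly the ones the paper uses. First, after reducing (via the analogues of Lemmas \ref{simple} and \ref{length}, which do transfer to the anti-coprime setting) to the metanilpotent case $G=O_{p'}(F(G))P$ with $P$ a Sylow $p$-subgroup, one shows that in this situation every coprime commutator \emph{is} an anti-coprime commutator: such a commutator $[x,y]$ has, say, $x\in O_{p'}(F(G))$ and $y$ a $p$-element, and Lemma \ref{bul} (on coprime automorphisms of nilpotent groups) rewrites $[x,y]=[u,y,y]=[y^{-u},y]^y$, which is anti-coprime. Only now does Theorem \ref{main1} legitimately apply, yielding a nilpotent subgroup $N$ of $n$-bounded index. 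Second, instead of passing to the quotient $G/\gamma_\infty(G)$, pass to the subgroup $N$: the hypothesis is inherited by subgroups, and in a nilpotent group every commutator is an anti-coprime commutator (split $a$ and $b$ into their Sylow components and discard the primes where one component is trivial), so the theorem of \cite{eber} applies inside $N$ and the resulting $H$ has $n$-bounded index in $G$. Your closing suggestion of bounding $\gamma_\infty(G)$ by combining Sylow commutator subgroups via \cite{bhn} or \cite{dieshu} cannot be made to work, for the reason given above.
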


We also consider finite groups with small centralizers of coprime and anti-coprime commutators. In an earlier paper \cite{DMS_small} we considered groups in which,  for some group-word $w$ and a positive integer $n$, all nontrivial $w$-values have centralizer of order at most $n$. For some families of group-words it was shown that if $G$ is a finite group in which $w(G)\neq1$ and $|C_G(x)| \le n$ for every nontrivial $w$-value, then the order of $G$ is $n$-bounded. Here we will establish similar results with respect to coprime and anti-coprime commutators.

\begin{theorem}\label{cent_copr}
Let $G$ be a finite group in which the centralizers of nontrivial coprime commutators have order at most $n$. Then either $G$ is nilpotent or it has $n$-bounded order. 
\end{theorem}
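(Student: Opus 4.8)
If $G$ is nilpotent there is nothing to prove, so I assume $G$ is not nilpotent and set $N=\gamma_\infty(G)\neq1$. The plan is to first factor out the hypercentre so as to reduce to the case $Z(G)=1$, and then to extract the whole bound from a single minimal normal subgroup.

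The starting remark is that $N$, being generated by coprime commutators not all of which are trivial, lies in the centralizer of one nontrivial coprime commutator, so $|C_G(N)|\le n$. I would then show that $Z_\infty(G)\le C_G(N)$. Indeed, $G$ acts nilpotently on $Z_\infty(G)$, hence on each $V_p:=O_p(Z_\infty(G))$; a $p'$-element $y$ acting coprimely and nilpotently on the $p$-group $V_p$ centralizes it, by iterating the coprime identity $[V_p,\langle y\rangle]=[V_p,\langle y\rangle,\langle y\rangle]$. Thus $O^p(G)\le C_G(V_p)$, and since $N=\gamma_\infty(G)\le O^p(G)$ we get $V_p\le C_G(N)$, so $Z_\infty(G)=\prod_p V_p\le C_G(N)$ and $|Z_\infty(G)|\le n$. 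Next I would check that the hypothesis survives a central quotient $G/M$ ($M\le Z(G)$) with the same constant: a nontrivial coprime commutator $\bar x=[\bar a,\bar b]$ of $G/M$ lifts to a coprime commutator $x=[a,b]$ of $G$ (lift $\bar a,\bar b$ to a $\pi(\bar a)$-element and a $\pi(\bar b)$-element), and $g\mapsto[x,g]$ is a homomorphism from $\{g:[x,g]\in M\}$ to $M$ with kernel $C_G(x)$, so $|C_{G/M}(\bar x)|\le|C_G(x)|\le n$. Climbing the upper central series one central step at a time, $\overline G:=G/Z_\infty(G)$ still satisfies the hypothesis with the same $n$, is non-nilpotent, and has trivial centre, while $|G|\le n\,|\overline G|$; so it remains to bound $|\overline G|$.

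So assume $Z(G)=1$. Pick a minimal normal subgroup $A$ of $G$; since $[A,G]$ is a normal subgroup contained in $A$ and $Z(G)=1$, minimality forces $[A,G]=A$, so $G$ acts non-nilpotently on $A$. If $A$ is an elementary abelian $p$-group, this produces a $p'$-element $y$ with $[A,y]\neq1$, and as $A$ is abelian the map $a\mapsto[a,y]$ carries $A$ \emph{onto} $[A,y]$; hence every element of $[A,y]$ is a coprime commutator, and picking $1\neq b\in[A,y]$ gives $A\le C_G(b)$ with $b$ a nontrivial coprime commutator, so $|A|\le n$, whence $|C_G(A)|\le n$ and $G/C_G(A)\hookrightarrow\operatorname{Aut}(A)$ bound $|G|$. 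If instead $A=T_1\times\cdots\times T_k$ with $T$ nonabelian simple, non-nilpotency of $T$ gives a $q$-element $s$ and a $q'$-element $t$ in $T_1$ with $[s,t]\neq1$; then $C_G([s,t])\supseteq C_{T_1}([s,t])\times T_2\times\cdots\times T_k$, so $|T|^{k-1}\,|C_T([s,t])|\le n$, whence either $k\ge2$ (forcing $|T|\le n$ and $k\le 1+\log_{60}n$, so $|A|\le n^2$), or $k=1$ and then $T$ is a finite simple group having an element of centralizer order $\le n$, hence of $n$-bounded order by CFSG; in either case $|A|$ is $n$-bounded, and $|C_G(A)|\le n$ together with $G/C_G(A)\hookrightarrow\operatorname{Aut}(A)$ finishes the bound on $|G|$.

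The step I expect to be most delicate is the nonabelian case, which rests on the classification of finite simple groups (a simple group with a nontrivial element of centralizer order at most $n$ has $n$-bounded order). The other point needing care is the reduction to $Z(G)=1$: a naive induction on $|G|$ would lose an unbounded factor, so one really wants $|Z_\infty(G)|\le n$, for which the coprime-action argument combined with $\gamma_\infty(G)\le O^p(G)$ seems to be the right tool. The abelian case, by contrast, should be painless once one thinks of looking at $[A,y]$ for a suitable $p'$-element $y$.
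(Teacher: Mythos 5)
Your proof is correct, but it takes a genuinely different route from the paper's. The paper first invokes Hartley's theorem (Theorem \ref{H-thmA}) to pass to a soluble normal subgroup of bounded index, climbs a characteristic series with abelian sections using Lemmas \ref{total} and \ref{abel} to produce a nilpotent normal subgroup of $n$-bounded index containing no nontrivial coprime commutators (Lemma \ref{reduction:copr}), shows that this subgroup lies in $Z_\infty(G)$, and then applies the Baer-type Theorem \ref{ko} to bound $|\gamma_\infty(G)|$ and hence $|G|$ via $C_G(\gamma_\infty(G))$. You instead bound $|Z_\infty(G)|$ by $n$ outright --- it centralizes $\gamma_\infty(G)$, which contains a nontrivial coprime commutator --- reduce to trivial centre, and then read the whole bound off a single minimal normal subgroup $A$: in the elementary abelian case the surjectivity of $a\mapsto[a,y]$ plants a nontrivial coprime commutator inside $A$ whose centralizer contains $A$, and in the semisimple case a coprime commutator inside one simple factor has centralizer containing all the other factors. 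This avoids Theorem \ref{ko} and the derived-series bookkeeping, and the observation $|Z_\infty(G)|\le n$ is a pleasant sharpening of the paper's ``$Z_\infty(G)$ has $n$-bounded index''; on the other hand your semisimple case needs CFSG (or, equivalently, Hartley's theorem again, which the paper also relies on), so the depth of external input is comparable. Two small points: the sentence ``$N$\ldots lies in the centralizer of one nontrivial coprime commutator'' should say that $C_G(N)$ lies in that centralizer --- the inclusion you actually use, $C_G(N)\le C_G(x)$ for a nontrivial coprime commutator $x\in N$, is the correct one; and in the $k=1$ semisimple case you could quote Theorem \ref{H-thmA} (trivial soluble radical of a simple group) rather than appeal to CFSG directly.
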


\begin{theorem}\label{cent_anti}
Let $G$ be a finite group in which the centralizers of nontrivial anti-coprime commutators have order at most $n$. Then either $G$ is abelian or it has $n$-bounded order. 
\end{theorem}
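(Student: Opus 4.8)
We may assume that $G$ is nonabelian, and we must show that $|G|$ is $n$-bounded. Since $G'$ is generated by the anti-coprime commutators of $G$, there is at least one nontrivial anti-coprime commutator $w\in G$; as $Z(G)\leq C_G(w)$ we obtain $|Z(G)|\leq n$, and as $w\in C_G(w)$ every anti-coprime commutator has order at most $n$. A more useful remark is that \emph{every nonabelian Sylow subgroup of $G$ has order at most $n$}. Indeed, let $P\in\mathrm{Syl}_p(G)$ be nonabelian and let $d$ be its nilpotency class. Then $\gamma_d(P)$ is a nontrivial subgroup contained in $Z(P)$; since $\gamma_d(P)=[\gamma_{d-1}(P),P]$ is generated by the elements $[u,h]$ with $u\in\gamma_{d-1}(P)$ and $h\in P$, some such element $[u,h]$ is nontrivial. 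Then $u$ and $h$ are nontrivial $p$-elements, so $[u,h]$ is a nontrivial anti-coprime commutator of $G$; being central in $P$, it satisfies $P\leq C_G([u,h])$, whence $|P|\leq n$.

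Suppose first that $G$ is nilpotent. As $G$ is nonabelian, it has a nonabelian Sylow subgroup $P$, and $G=P\times K$ where $K$ is the product of the remaining Sylow subgroups. Choosing $[u,h]$ as in the previous paragraph, this is a nontrivial anti-coprime commutator lying in $Z(P)$, hence in $Z(G)$; therefore $C_G([u,h])=G$ and so $|G|\leq n$, which settles the nilpotent case.

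From now on assume that $G$ is not nilpotent; this is the main case. The plan is to show that every nontrivial coprime commutator of $G$ has $n$-bounded centralizer and then to invoke Theorem~\ref{cent_copr}: since $G$ is not nilpotent, that theorem will force $|G|$ to be $n$-bounded. So let $z=[x,y]$ be a nontrivial coprime commutator, say with $x$ a $\pi$-element and $y$ a $\pi'$-element. A direct computation gives $[x,cy]=x^{-1}y^{-1}c^{-1}xcy=x^{-1}y^{-1}xy=z$ for every $c\in C_G(x)$, so one would like to choose the representative $cy$ of the coset $C_G(x)y$ so that its set of prime divisors is exactly $\pi(x)$, which would exhibit $z$ itself as an anti-coprime commutator (and this is indeed what happens in the examples one inspects). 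More robustly, since $z$ lies in $Z(C_G(z))$, it suffices to produce \emph{any} nontrivial anti-coprime commutator inside $Z(C_G(z))$: such an element is centralized by the whole of $C_G(z)$, so its centralizer, which has order at most $n$, contains $C_G(z)$.

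To carry this through in general one exploits the non-nilpotency of $G$, together with the structural description of $\gamma_\infty(G)$ developed in Section~\ref{sec:3} (in the spirit of Theorem~\ref{tec}), the techniques of \cite{DMS_small}, and, for the nonabelian sections of $G$, input from the classification of finite simple groups (Ore's theorem and its refinements), which supply an abundance of anti-coprime commutators with large centralizers in finite simple groups. I expect the main obstacle to be exactly the assembly of these local facts into the required global bound on the centralizers of coprime commutators, and in particular the case in which all Sylow subgroups of $G$ are abelian: there the ``central simple commutator'' device used above is unavailable, and the existence of suitable anti-coprime commutators must instead be extracted from the non-nilpotent action of $G$ on $\gamma_\infty(G)$.
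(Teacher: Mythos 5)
Your reduction to the non-nilpotent case is fine as far as it goes: the observation that a nonabelian Sylow subgroup $P$ contains a nontrivial anti-coprime commutator in $Z(P)$ is correct and disposes of the nilpotent case cleanly (indeed more directly than the paper, which handles that case by citing \cite{DMS_small} for the commutator word). But the main case --- $G$ non-nilpotent --- is not proved. Your plan is to bound the centralizer of an arbitrary nontrivial coprime commutator $z$ and invoke Theorem~\ref{cent_copr}, and for that you would need to exhibit a nontrivial anti-coprime commutator in $Z(C_G(z))$; you yourself acknowledge that you cannot do this in general (e.g.\ when all Sylow subgroups are abelian), and the appeal to Theorem~\ref{tec}, to the techniques of \cite{DMS_small}, and to CFSG/Ore-type results is a list of hoped-for tools rather than an argument. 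As written, this is a genuine gap, not a detail to be filled in.

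The idea you are missing is the identity $[x,y,y]=[y^{-x},y]^y$, already used in the introduction and in the proof of Theorem~\ref{main-anti}: it shows that \emph{every} value of the $2$-Engel word is an anti-coprime commutator, with no hypothesis on $x$ and $y$ whatsoever. Hence your hypothesis gives $|C_G(v)|\le n$ for every nontrivial value $v$ of the word $[x,y,y]$, and the results of \cite{DMS_small} on words with small centralizers of their values apply directly: either $|G|$ is $n$-bounded or $G$ is a $2$-Engel group. In the latter case $G$ is nilpotent, every commutator is an anti-coprime commutator (the componentwise adjustment in the direct product of the Sylow subgroups), and a second application of \cite{DMS_small} --- or your own nilpotent-case argument --- finishes the proof. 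This bypasses entirely the coprime-commutator route and the case analysis you were attempting.
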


The next section of this paper collects some preliminary results, Theorems \ref{main1} and \ref{main-anti} are proved in the third and fourth section respectively, and the last section is devoted to the study of finite groups with bounded centralizers of coprime (resp. anti-coprime) commutators.

\section{Preliminaries}

For a group $A$ acting by automorphisms on a group $G$ we use the usual notation for commutators $[g,a]=g^{-1}g^a$ and commutator subgroups $[G,A]=\langle [g,a]\mid g\in G,\;a\in A\rangle$, as well as for centralizers $C_G(A)=\{g\in G\mid g^a=g \text{ for all }a\in A\}$.

We say for short that the group $A$ acts on $G$ coprimely if the orders of $A$ and $G$ are finite and coprime, that is, $(|A|,|G|)=1$. Several well-known facts about coprime actions  will be often used without special references.
\begin{lemma}\cite[Lemmas 4.28--4.29]{isaacs}\label{coprime}
Let a group $A$ act coprimely on a finite group $G$. We have
\begin{itemize}
  \item[\rm (a)] $G=[G,A]C_G(A)$ and  $[G, A, A] = [G, A];$
  \item[\rm (b)] If $N$ is a normal $A$-invariant subgroup of $G$, then $C_{G/N}(A) = C_G(A)N/N$.
\end{itemize}
  \end{lemma}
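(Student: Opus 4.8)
The plan is to derive everything from the coprimality $(|A|,|G|)=1$ together with elementary commutator calculus, treating the genuinely non-trivial lifting statement (b) as the core. I would start from the key special case: \emph{if $A$ acts coprimely on a finite group $G$ and centralizes $[G,A]$, then $[G,A]=1$}. To prove it, fix $g\in G$ and $a\in A$, set $c=[g,a]=g^{-1}g^{a}$, and note the elementary identity $g^{a^{k}}=g\,c\,c^{a}\cdots c^{a^{k-1}}$, proved by induction on $k$ (the case $k=1$ being $g^{a}=gc$). Since $A$ centralizes $c\in[G,A]$, every conjugate $c^{a^{i}}$ equals $c$, so the product collapses to $g^{a^{k}}=gc^{k}$; taking $k=|a|$ gives $c^{|a|}=1$. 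As $c\in G$ its order divides $|G|$, while $c^{|a|}=1$ forces that order to divide $|A|$, so coprimality yields $c=1$ and hence $[G,A]=1$.

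From this I obtain both assertions of (a). Applying the special case to $\bar G=G/[G,A,A]$ — coprime action passes to quotients, and $A$ centralizes $[\bar G,A]=[G,A]/[G,A,A]$ by construction — gives $[\bar G,A]=1$, i.e. $[G,A]\le[G,A,A]$; the reverse inclusion is trivial, so $[G,A,A]=[G,A]$. For the factorization, observe that $A$ acts trivially on $G/[G,A]$, since each generator $[g,a]$ of $[G,A]$ becomes trivial there; thus $C_{G/[G,A]}(A)=G/[G,A]$, and feeding this into (b) (established below) with $N=[G,A]$ yields $G/[G,A]=C_G(A)[G,A]/[G,A]$, that is, $G=[G,A]C_G(A)$.

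It remains to prove (b), which carries the real content. The inclusion $C_G(A)N/N\le C_{G/N}(A)$ is immediate, so the task is to lift an $A$-fixed coset $gN$ to a genuine fixed point $gm\in C_G(A)$ with $m\in N$. Writing $g^{a}=g\,f(a)$ with $f(a)\in N$, the map $f$ is a $1$-cocycle, and the desired $m$ exists exactly when $f$ is a coboundary. When $N$ is abelian this is the vanishing of $H^{1}(A,N)$, forced by coprimality: raising to the $|A|$-th power is a bijection of $N$, so one averages the cocycle and solves for $m$ explicitly. The general case I would reduce to the abelian one by induction on $|N|$, quotienting by a non-trivial $A$-invariant abelian subgroup of $N$ that is normal in $G$ and then lifting the fixed point through that one factor.

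The step I expect to be the main obstacle is precisely this last reduction when $N$ is non-abelian \emph{and} admits no proper non-trivial $A$-invariant subgroup normal in $G$ — that is, when a minimal such $N$ is a direct product of non-abelian simple groups. Here the naive descent through abelian factors is unavailable, and one must instead invoke a deep structural input: since $(|A|,|N|)=1$, one of $|A|$, $|N|$ is odd, so by the Feit--Thompson theorem one of $A$, $N$ is solvable, which restores a composition series with abelian factors (equivalently, one uses the conjugacy part of Schur--Zassenhaus). This reliance on solvability of one of the two groups is the only genuinely non-elementary ingredient; everything else is routine commutator manipulation together with the vanishing of coprime cohomology.
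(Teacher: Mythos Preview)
The paper does not prove this lemma at all; it is simply quoted from Isaacs's textbook with a citation, so there is no in-paper argument to compare against. Your proposal is correct and follows the standard textbook route (essentially the one in Isaacs): the key special case ``$A$ centralizes $[G,A]$ implies $[G,A]=1$'', the deduction $[G,A,A]=[G,A]$ by passing to $G/[G,A,A]$, the factorization $G=[G,A]C_G(A)$ obtained from (b) with $N=[G,A]$, and (b) itself via vanishing of coprime $H^1$ in the abelian case together with Schur--Zassenhaus conjugacy (hence Feit--Thompson) for the general reduction. The only imprecision is the phrase ``restores a composition series with abelian factors'' when it is $A$ rather than $N$ that turns out to be solvable; in that case one does not get abelian normal slices of $N$ but instead invokes the $A$-solvable case of Schur--Zassenhaus conjugacy directly, as you parenthetically note.
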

  
 The following lemma from \cite{bul}  will be helpful. The nilpotency of $G$ cannot be omitted from the hypotheses.
 
\begin{lemma}\cite[Lemma 4.6]{bul} \label{bul} Let $\phi$ be a coprime automorphism of a finite nilpotent group $G$. Then the set of all elements of the form $[g,\phi]$, where $g\in G$, coincides with that of the elements of the form $[g,\phi,\phi]$.
\end{lemma}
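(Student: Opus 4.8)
The plan is to reformulate the statement as a single factorization of $G$ and then prove that factorization by a short induction on $|G|$. Write $C=C_G(\phi)$ and, for $g\in G$, put $\delta(g)=[g,\phi]=g^{-1}g^{\phi}$, so that $[g,\phi,\phi]=\delta(\delta(g))$; let $\delta(G)=\{\delta(g):g\in G\}$ and $\delta(\delta(G))=\{\delta(x):x\in\delta(G)\}$. Since $\delta(G)\subseteq G$ we get $\delta(\delta(G))\subseteq\delta(G)$ for free — each $[g,\phi,\phi]$ is $[y,\phi]$ with $y=[g,\phi]$ — so the whole point is the reverse inclusion.

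First I would note that the fibres of $\delta$ are precisely the cosets $Cg$: a direct computation gives $\delta(cg)=\delta(g)$ for $c\in C$, while $\delta(g)=\delta(g')$ forces $g'g^{-1}\in C$. Consequently, for a fixed $g$, we have $[g,\phi]\in\delta(\delta(G))$ if and only if $\delta(x)=\delta(g)$ for some $x\in\delta(G)$, that is, if and only if $Cg\cap\delta(G)\neq\emptyset$. Letting $g$ range over $G$, this shows that the lemma is equivalent to the factorization
\[
C_G(\phi)\cdot\{[g,\phi]:g\in G\}=G .
\]
This is the conceptual point of the argument: the commutator set $\{[g,\phi]\}$ is not a subgroup, but its description as a union of cosets of $C_G(\phi)$ turns the claim into the transparent statement that it meets every coset of $C_G(\phi)$.

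To prove the factorization I would induct on $|G|$, the case $|G|=1$ being trivial. For $|G|>1$ use nilpotency: $Z(G)\neq 1$, so choose a minimal nontrivial $\phi$-invariant subgroup $N$ of $Z(G)$; then every $\phi$-invariant subgroup of $N$ is $1$ or $N$. If $\phi$ acts trivially on $N$, then $N\leq C$. If not, then $C_N(\phi)\neq N$ and $[N,\phi]\neq 1$, so $C_N(\phi)=1$ and $[N,\phi]=N$; since $N$ is abelian, the set $\{[t,\phi]:t\in N\}$ is already the subgroup $[N,\phi]=N$, whence $N\subseteq\delta(G)$, and the identity $[gt,\phi]=[g,\phi][t,\phi]$ (valid for central $t$) gives $\delta(G)N=\delta(G)$. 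Now pass to $\overline G=G/N$: by Lemma~\ref{coprime}(b) one has $C_{\overline G}(\phi)=CN/N$, and clearly $\delta(\overline G)=\delta(G)N/N$, so the inductive hypothesis yields $C\cdot\delta(G)\cdot N=G$. Finally one removes the factor $N$: in the first case $N$ is central and contained in $C$, so $C\cdot\delta(G)\cdot N=C\cdot\delta(G)$; in the second case $\delta(G)N=\delta(G)$, so again $C\cdot\delta(G)\cdot N=C\cdot\delta(G)$. Either way $C\cdot\delta(G)=G$, completing the induction.

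The only step that needs real care is this last lifting, and what makes it go through is choosing $N$ to be \emph{minimal} (equivalently, an irreducible $\phi$-module): this forces the clean dichotomy $N\leq C_G(\phi)$ or $[N,\phi]=N$, so that the extra factor $N$ coming from the quotient is either absorbed into $C_G(\phi)$ by centrality or already lies inside the commutator set. (If one prefers, $G$ may first be replaced by each of its $\phi$-invariant Sylow subgroups, but the argument above does not require this.)
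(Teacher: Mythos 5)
Your proof is correct. Note that the paper itself does not prove this statement: it is quoted verbatim from \cite[Lemma 4.6]{bul}, so there is no in-paper argument to compare against; your write-up is a genuine self-contained proof. The key conceptual move --- observing that the fibres of $g\mapsto[g,\phi]$ are exactly the cosets of $C_G(\phi)$, so that the lemma is equivalent to the set-theoretic factorization $G=C_G(\phi)\cdot\{[g,\phi]:g\in G\}$ --- is sound, and it correctly isolates where each hypothesis enters: nilpotency gives $Z(G)\neq 1$ and hence a central minimal $\phi$-invariant $N$, while coprimality is used exactly once, in Lemma \ref{coprime}(b), to identify $C_{G/N}(\phi)$ with $C_G(\phi)N/N$ (without which the statement is false, e.g.\ for inversion on a cyclic group of order $4$). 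The dichotomy forced by minimality ($N\le C_G(\phi)$, or $[N,\phi]=N$ with the commutator map on the abelian group $N$ surjective, so $N\subseteq\{[g,\phi]\}$ and $\{[g,\phi]\}N=\{[g,\phi]\}$) correctly absorbs the extra factor $N$ in the lift from $G/N$, and all the small computations ($\delta(cg)=\delta(g)$, $[gt,\phi]=[g,\phi][t,\phi]$ for central $t$) check out. The only stylistic remark is that the final parenthetical about Sylow subgroups is indeed unnecessary, as you say.
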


Let $Z_\infty(G)$ stand for the last term of the upper central series of a (finite) group $G$. The next result was established in \cite{kos}. It strengthens a classical theorem, due to Baer.

\begin{theorem}\label{ko} Suppose that $Z_\infty(G)$ has finite index $t$ in $G$. Then $\gamma_\infty(G)$ is finite and has $t$-bounded order.
\end{theorem}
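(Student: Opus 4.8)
The plan is to analyze the interaction between the nilpotent residual $R=\gamma_\infty(G)$ and the hypercenter $Z=Z_\infty(G)$, and to show that their overlap $M=R\cap Z$ is small. Write $t=[G:Z]$. Since $R/(R\cap Z)\cong RZ/Z$ embeds in $G/Z$, we have $[R:M]\le t$ at the outset, so the whole problem reduces to bounding $|M|$ in terms of $t$: once this is done, $|R|=[R:M]\,|M|$ is $t$-bounded. I would obtain the bound on $|M|$ from two independent facts, namely that $M$ is central in $R$ and that $M$ is contained in $R'=(\gamma_\infty(G))'$.

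For the first fact, note that $M\trianglelefteq G$ and $M\le Z_\infty(G)$, so the chain $M\cap Z_i(G)$ is a $G$-invariant series of $M$ all of whose factors are centralized by $G$. By Kaloujnine's theorem the stability group of such a series is nilpotent, hence $G/C_G(M)$ is nilpotent; as $\gamma_\infty(G)$ is the smallest normal subgroup with nilpotent quotient, this forces $R\le C_G(M)$, i.e. $M\le Z(R)$. Combined with $[R:M]\le t$ this yields $[R:Z(R)]\le t$, and the quantitative form of Schur's theorem (with Wiegold's bound) shows that $|R'|$ is $t$-bounded.

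The heart of the argument, and the step I expect to be the main obstacle, is showing $M\le R'$, equivalently that the image $\overline M$ of $M$ in the abelianization $A=R/R'$ is trivial. Here I would exploit that $A$ is a finite abelian group on which $G$ acts through $P:=G/C_G(A)$, which is a quotient of the nilpotent group $G/R$ and hence nilpotent; that $A=[A,G]=[A,P]$ because $\gamma_\infty(G)=[\gamma_\infty(G),G]$; and that $P$ acts \emph{unipotently} on $\overline M$ since $M\le Z_\infty(G)$. Decomposing $A=\bigoplus_\ell A_\ell$ into its primary components, each $P$-invariant, the relation $A_\ell=[A_\ell,P]$ forces $C_{A_\ell}(P_{\ell'})=0$ (writing $A_\ell=C_{A_\ell}(P_{\ell'})\oplus[A_\ell,P_{\ell'}]$ by coprime action, on the fixed-point summand only the $\ell$-group $P_\ell$ can act, and an $\ell$-group has proper commutator on any nonzero $\ell$-group, contradicting $A_\ell=[A_\ell,P]$ unless that summand vanishes). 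On the other hand $P_{\ell'}$ acts both coprimely and unipotently on $\overline M\cap A_\ell$, and by Lemma \ref{coprime}(a) a coprime unipotent action is trivial, so $\overline M\cap A_\ell\le C_{A_\ell}(P_{\ell'})=0$. Hence $\overline M=0$ and $M\le R'$.

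Putting the pieces together, $|M|\le|R'|$ is $t$-bounded by the second paragraph, so $|R|=[R:M]\,|M|$ is $t$-bounded; equivalently $|R/R'|\le|G/Z|=t$ (since $M\le R'$) while $|R'|$ is $t$-bounded. This proves that $\gamma_\infty(G)$ has $t$-bounded order. The delicate point throughout is the primary-decomposition argument of the third paragraph: it is precisely what prevents the hypercentral tail $M$ from being large, and it is where the two hypotheses $M\le\gamma_\infty(G)$ and $M\le Z_\infty(G)$ are played against each other.
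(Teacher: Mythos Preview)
The paper does not give its own proof of Theorem~\ref{ko}; the result is quoted from \cite{kos} (Kurdachenko--Otal--Subbotin) as a strengthening of Baer's classical theorem and is used only as a black box in the proofs of Theorem~\ref{tec} and Theorem~\ref{cent_copr}. So there is no in-paper argument to compare yours against.

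That said, your argument is correct for finite $G$, which is the only case the paper requires. The two structural ingredients you isolate---that $M=R\cap Z$ is central in $R$ (via Kaloujnine's stability theorem together with the minimality of the nilpotent residual), and that $M\le R'$ (via the primary decomposition of $A=R/R'$, playing the coprime action of $P_{\ell'}$ on $A_\ell$ against its unipotence on $\overline M$)---combine with Schur--Wiegold exactly as you describe. The primary-decomposition step is indeed the crux, and your handling of it is sound: $A_\ell=[A_\ell,P]$ forces $C_{A_\ell}(P_{\ell'})=0$, while coprime-plus-unipotent forces $\overline M\cap A_\ell$ into that centralizer. One caveat worth noting: the result in \cite{kos} is stated for arbitrary (possibly infinite) groups, where one cannot immediately assume that $A$ is finite, that $[R,G]=R$, or that $Z_\infty(G)=Z_c(G)$ for some finite $c$; your outline would need additional care in that generality. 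Within the finite setting of this paper, however, your argument is complete and self-contained.
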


The following result is a particular case of Theorem 1.2 in  \cite{qjm2021}. It will play an important role in the proof of Theorem \ref{main1}.

\begin{theorem}\label{qjm} Let $n$ be a positive integer and $G$ a group containing a subgroup $A$ such that $|[g, a]^G|\leq n$ for all $g\in G$ and $a\in A$. Then the commutator subgroup of $[G, A]$ has finite $n$-bounded order.
\end{theorem}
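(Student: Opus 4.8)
The plan is to prove that $H=[G,A]$ — which, since $A\le G$, is a normal subgroup of $G$ — is generated by elements whose conjugacy classes have $n$-bounded size, and then to extract from this a bound on $[H:Z(H)]$ depending on $n$ alone, after which B. H. Neumann's theorem \cite{bhn} (or Schur's theorem) yields the desired bound on $|H'|=|[G,A]'|$. Since $|H'|$ is determined by its finite subsets, and every element of $H'$ is a product of finitely many commutators of $H$, I would first reduce to the case in which $G$ and $A$ are finitely generated, keeping careful track of the fact that the eventual bound must depend on $n$ only. Writing $G=\langle g_1,\dots,g_d\rangle$ and $A=\langle a_1,\dots,a_e\rangle$ and using the identities $[g_1g_2,a]=[g_1,a]^{g_2}[g_2,a]$ and $[g,a_1a_2]=[g,a_2][g,a_1]^{a_2}$, one sees that $H$ is the normal closure in $G$ of the finite set $\{[g_i,a_j]\}$. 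As each $[g_i,a_j]$ satisfies $[G:C_G([g_i,a_j])]\le n$, the intersection of the (at most $den$) centralizers of the $G$-conjugates of the $[g_i,a_j]$ is a central subgroup of $H$ of index at most $n^{den}$; hence $Z(H)$ has finite index and $H'$ is finite. The difficulty is that this index is bounded only in terms of $n$ and the number $den$ of generators, whereas the theorem demands a bound in terms of $n$ alone.

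The counting inputs I would rely on come from the commutator form. For $x=[g,a]$ one has $[x,k]=x^{-1}x^{k}$, so $\{[x,k]:k\in G\}$ equals $x^{-1}x^{G}$ and hence has at most $|x^G|\le n$ elements. Moreover, for such an $x$ and any $y\in H\le G$ one computes $([x,y])^{k}=(x^{k})^{-1}x^{yk}$, which ranges over $\{u^{-1}v:u,v\in x^{G}\}$, a set of size at most $n^{2}$; thus every commutator $[x,y]$ with $x$ a generating commutator and $y\in H$ has at most $n^{2}$ conjugates in $G$. Since $H'$ is generated by the elements $[x,y]$ (with $x,y$ running over a generating set of $H$) together with their conjugates, $H'$ is generated by elements lying in conjugacy classes of $n$-bounded size. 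The task thus reduces to bounding the index $[H:Z(H)]$, equivalently the number of ``essential'' generators of $H$ modulo its centre, purely in terms of $n$.

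Removing the dependence on the number of generators is the heart of the matter and the step I expect to be the main obstacle; it is precisely here that the commutator structure $[g,a]$ must be used, and not merely the bounded-conjugacy property of the generators. That the latter is by itself insufficient is shown by a restricted direct product of unboundedly many copies of $S_3$, generated by the transpositions: there every generator has exactly three conjugates, yet the derived subgroup is infinite and the group is not even BFC, so it cannot arise as $[G,A]$ under our hypothesis. The plan to exclude such an unbounded ``independent'' spread is to show that the commutator relations among the $[g_i,a_j]$ force $H/Z(H)$ to be generated by $n$-boundedly many elements, so that its order, and hence $|H'|$, is $n$-bounded; I expect this to require a careful analysis exploiting that all the generators are commutators built from the single subgroup $A$, upgrading the mere bounded-conjugacy-class property into genuine structural control. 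Once $[H:Z(H)]$ is $n$-bounded, Wiegold's bound for $|H'|$ in terms of $[H:Z(H)]$, or equivalently Neumann's theorem \cite{bhn}, completes the argument.
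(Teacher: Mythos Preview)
The paper does not prove this theorem: it is stated without proof as a particular case of Theorem~1.2 in \cite{qjm2021} and then used as a black box in Lemma~\ref{key} and Theorem~\ref{tec}. There is therefore no proof here against which to compare your proposal.

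As for the proposal itself, there is a genuine gap. You correctly isolate the crucial step --- removing the dependence of the bound on the number of generators of $G$ and $A$ --- and you supply the right cautionary example (the restricted product of copies of $S_3$) showing that a generating set in bounded conjugacy classes does not by itself suffice. But the proposal stops at exactly this point, saying only that you ``expect this to require a careful analysis exploiting that all the generators are commutators built from the single subgroup $A$.'' That missing analysis \emph{is} the content of the theorem; everything preceding it is routine and yields only a bound on $|H'|$ in terms of $n$ together with $d$ and $e$. Your intermediate target of bounding $[H:Z(H)]$ purely in terms of $n$ is moreover at least as hard as the theorem itself --- in fact strictly stronger, since the extraspecial $p$-groups of order $p^{2k+1}$ show that a bound on $|H'|$ does not in general imply one on $[H:Z(H)]$ --- and nothing in the commutator identities you list points toward such a bound. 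In short, the outline identifies the obstacle accurately but does not overcome it.
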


The next observation (cf. \cite[Lemma 1.1.2]{Segal}) will be useful. 
\begin{lemma}\label{finite} Let $X$ be a set of generators of a finite group $G$. Then every element of $G$ can be written as a product of at most $|G|$ elements of $X$. 
\end{lemma}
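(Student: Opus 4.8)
The plan is to filter $G$ by the number of factors needed to express an element, and to show that this filtration must stabilize within $|G|$ steps. First I would reduce to positive products: since $G$ is finite, for each $x\in X$ the inverse $x^{-1}$ equals the power $x^{m-1}$, where $m$ is the order of $x$, so the submonoid generated by $X$ coincides with the subgroup $\langle X\rangle=G$. Hence it suffices to bound the number of factors in products of elements of $X$, with no inverses allowed.

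Next, for each integer $k\ge0$ I would let $S_k\subseteq G$ be the set of elements expressible as a product of at most $k$ elements of $X$, with the convention $S_0=\{1\}$ (the empty product). These sets form an ascending chain $S_0\subseteq S_1\subseteq S_2\subseteq\cdots$ whose union is $G$ by the previous paragraph, and they obey the recursion $S_{k+1}=S_k\cup S_kX$. The key step is to show that the chain increases strictly until it reaches $G$: if $S_{k+1}=S_k$ for some $k$, then $S_kX\subseteq S_k$, so $S_k$ is closed under right multiplication by $X$, and an immediate induction gives $S_j=S_k$ for all $j\ge k$; since the union of the $S_j$ is $G$, this forces $S_k=G$. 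Contrapositively, whenever $S_k\ne G$ we have a proper inclusion $S_k\subsetneq S_{k+1}$, and hence $|S_{k+1}|\ge|S_k|+1$.

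Finally I would read off the bound. Starting from $|S_0|=1$ and increasing the cardinality by at least one at each step before stabilization, while never exceeding $|G|$, we must have $S_k=G$ for some $k\le|G|-1$; in particular every element of $G$ is a product of at most $|G|$ elements of $X$. There is no serious obstacle here: the only point deserving care is the reduction to positive products in the first paragraph, which is what makes the phrase ``product of elements of $X$'' unambiguous, and everything else is a routine stabilization argument for a strictly ascending chain of finite sets.
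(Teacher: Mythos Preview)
Your argument is correct: the reduction to positive words via $x^{-1}=x^{\operatorname{ord}(x)-1}$ is valid, the stabilization argument for the chain $S_0\subseteq S_1\subseteq\cdots$ is clean, and the cardinality count gives $S_{|G|-1}=G$, which is even slightly sharper than required.

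As for comparison with the paper: there is nothing to compare, since the paper does not supply a proof of this lemma but simply cites it from Segal's book \cite[Lemma 1.1.2]{Segal}. Your proof is the standard one and would serve perfectly well here.
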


If $K$ is a subgroup of a finite group $G$, write $$Pr(K, G) = \frac{|\{(x, y) \in K\times G\,\mid\, xy = yx\}|}{|K|\,|G|}.$$
This is the probability that an element of $G$ commutes with an element of $K$ (the relative commutativity degree of $K$ in $G$). The following theorem was established in \cite{EloP}. The particular case where $K=G$ is a classical result due to P. M. Neumann \cite{neumann}.

\begin{theorem}\label{ds} Let $K$ be a subgroup of a finite group $G$ such that $Pr (K,G)=\epsilon>0$. Then there exist a normal subgroup $T$ of $\epsilon$-bounded index in $G$ and a subgroup $B$ of $\epsilon$-bounded index in $K$ such that $[T,B]$ has $\epsilon$-bounded order. 
\end{theorem}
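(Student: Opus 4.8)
The plan is to reduce everything to an averaging argument that produces a large reservoir of elements with small conjugacy class, and then to feed this into a BFC-type result to control the mutual commutator. First I would rewrite the hypothesis as
\[
\epsilon = Pr(K,G) = \frac{1}{|K|}\sum_{x\in K}\frac{|C_G(x)|}{|G|} = \frac{1}{|K|}\sum_{x\in K}\frac{1}{[G:C_G(x)]},
\]
and set $m=\lceil 2/\epsilon\rceil$. Splitting the sum according to whether $x$ lies in $S:=\{x\in K:[G:C_G(x)]\le m\}$, and using $\tfrac{1}{[G:C_G(x)]}<\tfrac{1}{m}\le\tfrac{\epsilon}{2}$ for $x\notin S$, gives $\epsilon|K|\le |S|+\tfrac{\epsilon}{2}|K|$, hence $|S|\ge\tfrac{\epsilon}{2}|K|$. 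Since $S\subseteq\langle S\rangle$, the subgroup $B:=\langle S\rangle$ satisfies $[K:B]\le 2/\epsilon$, so $B$ is already a candidate for the required bounded-index subgroup of $K$. Dually, from $\sum_{x\in K}|C_G(x)|=\sum_{g\in G}|C_K(g)|$ the same estimate applied to the average of $|C_K(g)|/|K|$ over $g\in G$ produces a set $S'=\{g\in G:[K:C_K(g)]\le m\}$ with $|S'|\ge\tfrac{\epsilon}{2}|G|$; taking $T$ to be the normal core in $G$ of $\langle S'\rangle$ yields a normal subgroup of $\epsilon$-bounded index in $G$.

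The next ingredient is that the small-class condition is stable under a bounded number of multiplications and under forming commutators. From $C_G(x)\cap C_G(y)\le C_G(xy)$ one gets $[G:C_G(xy)]\le m^2$ for $x,y\in S$, so any product of $k$ elements of $S$ has $G$-class of size at most $m^k$; and for $s\in S$, $g\in G$ the subgroup $C_G(s)\cap C_G(s)^g$ centralizes $[s,g]=s^{-1}s^g$, whence $[G:C_G([s,g])]\le m^2$ and likewise $|[g,s]^G|\le m^2$. Thus the generators $s$ of $B$ satisfy exactly the shape of hypothesis required by Theorem \ref{qjm}. The aim is then to combine these local bounds with the dual information coming from $T$ to conclude that $[T,B]$ has $\epsilon$-bounded order.

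The hard part will be the passage from generators to the whole subgroup: a generic element of $B$ is a product of unboundedly many elements of $S$, so the elementary estimates above only control classes of \emph{short} products, and nothing so far prevents long products from having large $G$-class. This is precisely the obstacle that B.\,H.\ Neumann's theorem is designed to overcome, and I expect it to be the crux of the argument. Concretely, I would try to verify the hypothesis $|[g,b]^G|\le n$ for \emph{all} $b\in B$ (not merely the generators $s\in S$) and then apply Theorem \ref{qjm} to obtain that the commutator subgroup of $[G,B]$ is $\epsilon$-bounded; reducing modulo this bounded subgroup, the induced action of $B$ on the nearly-abelian quotient should, together with the bounded-index normal subgroup $T$ extracted from $S'$, allow one to trim $T$ and $B$ by bounded-index amounts so that $[T,B]$ becomes bounded. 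Establishing that the hypothesis of Theorem \ref{qjm} indeed holds on all of $B$, rather than on a generating set, is where the real work lies, and it is the step I would expect to require the most care.
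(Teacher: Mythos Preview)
The paper does not prove Theorem \ref{ds}: it is quoted without proof from \cite{EloP}, so there is no argument in the present paper to compare your proposal against.

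Your opening moves are correct and standard---the averaging identity and the Markov-type split do produce a subgroup $B=\langle S\rangle$ of index at most $2/\epsilon$ in $K$ with $|s^G|\le m$ for every $s\in S$, and the dual count yields the candidate $T$; this is indeed how the argument in \cite{EloP} begins. You also correctly isolate the genuine difficulty: bounded class on a \emph{generating} set $S$ does not by itself bound $|b^G|$ or $|[g,b]^G|$ for an arbitrary $b\in B$.

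However, the concrete plan you propose for that hard step does not work as stated. The claim you would like to verify, namely $|[g,b]^G|\le n$ for every $b\in B$, cannot be obtained from your estimates: writing $b=s_1\cdots s_k$ only gives $|[g,b]^G|\le m^{2k}$ with $k$ unbounded, and nothing in your outline controls $k$. (One \emph{can} exploit the density $|S|\ge(\epsilon/2)|B|$ to cover $B$ by a bounded power of $S\cup S^{-1}$, but that is a genuinely additional ingredient you have not supplied.) Your appeal to B.\,H.\ Neumann's theorem is also misplaced: that theorem \emph{assumes} bounded class for all elements, so it does not ``overcome'' the generators-to-elements obstacle---it presupposes the obstacle has already been removed. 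Finally, even if the hypothesis of Theorem \ref{qjm} held for $A=B$, its conclusion would only bound $|[G,B]'|$, not $|[T,B]|$; the endgame you sketch (``reduce modulo this bounded subgroup \ldots\ trim $T$ and $B$'') would still require substantial further argument. In short, the proposal identifies the right starting point and the right bottleneck, but the route through Theorem \ref{qjm} does not close the gap.
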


\section{Coprime commutators with bounded conjugacy classes}\label{sec:3}

Remark that if $G$ is as in Theorem \ref{main1}, or Theorem  \ref{main-anti}, the hypotheses are inherited by subgroups and quotients of $G$.   We freely use this property throughout the paper without special references.

Throughout this section $G$ will be  a finite group in which $|x^G|\leq n$ whenever $x$ is a coprime commutator. Recall that the coprime commutators of $G$ generate the nilpotent residual $\gamma_\infty (G)$.

\begin{lemma}\label{key} Suppose $G$ has a normal subgroup $M$ and a subgroup $H$ such that $G=MH$ and $(|M|,|H|)=1$. Then the index $[H:C_H(M)]$ is $n$-bounded.
\end{lemma}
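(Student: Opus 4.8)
The plan is to reduce, by induction on $|M|$, to the case where $M$ is a chief factor of $G$, and to treat that case by hand. Two preliminary remarks set the stage. First, for $m\in M$, $h\in H$ the element $x=[m,h]$ is a coprime commutator, so $|x^G|\le n$; since $M\trianglelefteq G$ one has $C_G(x)\cap M=C_M(x)$ and $C_G(x)M/M\hookrightarrow G/M\cong H$, which forces $[M:C_M(x)]\le n$, and if moreover $x\in Z(M)$ then $M\le C_G(x)$ and this upgrades to $[H:C_H(x)]\le n$. Second, applying Theorem~\ref{qjm} inside $M\rtimes\langle h\rangle$ with $A=\langle h\rangle$ — the hypothesis is legitimate because $\langle h\rangle$ is abelian, so every $[mh^j,h^i]$ is $G$-conjugate to the coprime commutator $[m,h^i]$ — yields that $|[M,h]'|$ is $n$-bounded for each $h\in H$. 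Replacing $M$ by $[M,H]$ (which does not change $C_H(M)$ by coprime action) we may also assume $M=[M,H]$.

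For the inductive step pick a minimal normal subgroup $L$ of $G$ with $L\le M$. Working in $G/L$ handles the action of $H$ on $M/L$: by induction $[H:H_0]$ is $n$-bounded, where $H_0=\{h\in H:[M,h]\le L\}$. Once $[M,H_0]\le L$, coprime calculus gives $[M,H_0]=[M,H_0,H_0]\le[L,H_0]$, so $C_{H_0}(L)=C_{H_0}(M)$; if $L<M$ this is again handled by induction inside $LH_0$, and the remaining case is that $M$ itself is a chief factor of $G$. There are two subcases. If $M$ is elementary abelian $p$-group then $p\nmid|H|$ and, because $M$ is minimal normal and abelian, $\bar H:=H/C_H(M)\cong G/C_G(M)$ acts faithfully and irreducibly on $M$; by the preliminary remark each nonzero vector $[m,h]$ has $\bar H$-orbit of size $\le n$, its $\bar H$-orbit spans $M$ by irreducibility, hence $\dim M\le n$, and $\bar H$ embeds into the symmetric group on that orbit, so $|\bar H|\le n!$. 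If instead $M=S_1\times\cdots\times S_k$ is a product of non-abelian simple groups then $H$ permutes the $S_i$ transitively; if some $h\in H$ moves $S_1$ to $S_2$ then for $1\ne s\in S_1$ the coprime commutator $[s,h]$ has support exactly $\{1,2\}$ and its $G$-class has size at least $\tfrac{k}{2}\cdot[S:C_S(s)]^2$, forcing $k$ and (since $[S:C_S(s)]^2\le n$ and a finite simple group with a nontrivial class of size $\le m$ has order $\le m!$) $|S|$ to be $n$-bounded, so $|\operatorname{Aut}(M)|\ge[H:C_H(M)]$ is $n$-bounded; otherwise the permutation action is trivial, transitivity forces $k=1$, and $S$ has a nontrivial class of size $\le n$, so again $|S|$, $|\operatorname{Aut}(S)|$ and $[H:C_H(M)]$ are $n$-bounded.

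The main obstacle is that a naive induction of this shape multiplies the chief-factor estimate along a chief series of $M$ and so is not uniform in $|M|$. The fix is to feed in the extra information $|[M,h]'|\le c(n)$ from Theorem~\ref{qjm}: an element $[m,h]$ together with its at most $n$ $H$-conjugates surjects onto the $\langle h\rangle$-orbit of its image in the abelian section $[M,h]/[M,h]'$, and one checks (primary decomposition, $p$-th power maps, the classical fact that a coprime automorphism of a $p$-group has the same order on $M/pM$) that every automorphism of $M$ induced by $H$ has $n$-bounded order; combined with a reduction to $M$ a $p$-group — splitting off the Sylow subgroups of a nilpotent $M$, noting each $h$ moves at most $\log_2 n$ of them and keeping an $n$-bounded subset that detects $C_H(M)$, then passing to the Frattini quotient — this is what lets one assemble the chief-factor bounds into an estimate depending on $n$ alone. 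I expect this last assembly, i.e. controlling the (possibly reducible) elementary abelian module case without any dependence on the length of a chief series, to be the technically hardest point.
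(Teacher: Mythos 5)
Your preliminary reductions and your two chief-factor cases are essentially sound, but the argument has a genuine gap exactly where you say it does, and the sketched repair does not close it. The induction on $|M|$ proves, at best, a bound of the form $g(n)^{\ell}$ where $\ell$ is the composition length of $M$: each pass through the inductive step replaces the target index by a product of two indices of the same kind ($[H:H_0]\cdot[H_0:C_{H_0}(L)]$), so the estimate degrades once per chief factor and is not uniform in $|M|$. The proposed fix is not a proof but a list of hoped-for ingredients, and its central plank --- that every automorphism of $M$ induced by $H$ has $n$-bounded order --- cannot suffice even if established: bounding the exponent of $H/C_H(M)$ does not bound its order. For instance, $C_2^r$ acting coordinatewise by inversion on $\mathbb{F}_3^{\,r}$ has exponent $2$ but order $2^r$; what rules this configuration out is the global class-size hypothesis (a full-support commutator there has $2^r$ conjugates), and it is precisely this global input that your ``assembly'' step never brings to bear. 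So the hardest case you flag --- a reducible elementary abelian module of unbounded composition length --- remains open in your write-up.

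For comparison, the paper sidesteps the chief-series analysis entirely. After the same normalization $M=[M,H]$, $C_H(M)=1$, it first reduces to $H$ abelian (the order of a finite group is bounded in terms of the maximal order of its abelian subgroups, \cite[Theorem 5.2]{avisa}), then applies Theorem \ref{qjm} with $A=H$ to get $|M'|$ $n$-bounded, passes to $C_H(M')$ and to the faithful action on $M/M'$ to reduce to $M$ abelian as well. With both $M$ and $H$ abelian, every commutator $[m_1h_1,m_2h_2]$ of $G$ is a product of two coprime commutators, hence has class size at most $n^2$; the Eberhard--Shumyatsky theorem \cite{eber} then gives a nilpotent subgroup of $n$-bounded index, whose Fitting subgroup meets $H$ trivially, so $|H|$ is $n$-bounded. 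If you want to salvage your approach, you would need some analogue of this global step --- the local (chief-factor-by-chief-factor) information genuinely is not enough.
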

\begin{proof} Since $M=[M,H]C_M(H)$, it follows that $C_H([M,H])=C_H(M)$ and so, without loss of generality, we assume that $M=[M,H]$.  Moreover, we may assume that $C_H(M)=1$ and so we need to show that the order of $H$ is $n$-bounded. Note that the order of a finite group is bounded in terms of the maximum of orders of the abelian subgroups (see for instance \cite[Theorem 5.2]{avisa}) so we can assume that $H$ is abelian. 

By Theorem \ref{qjm} the subgroup $[G,H]'$, which is equal to $M'$, has $n$-bounded order. Thus the subgroup $C_G(M')$ has $n$-bounded index in $G$. We can replace $H$ with $C_H(M')$ and assume that $M'\leq C_M(H)$. 

Now consider the action of $H$ on $M/M'$ and let $C_0 = C_{H}(M/M')$. As $[M,C_0]\leq M'$ and $M'\leq C_M(H)$, we have  that $[M,C_0,C_0]=1$. It follows from Lemma \ref{coprime}  that $[M,C_0]=1$, hence $C_0 \le C_H(M)=1$. Therefore $H$ acts faithfully on $M/M'$ and so, without loss of generality, we can also assume that $M'=1$, that is, $M$ is abelian. 

Thus, both $M$ and $H$ are abelian. Choose arbitrarily two elements $x,y\in G$ and write $x=m_1h_1$ and $y=m_2h_2$ for suitable $m_1,m_2\in M$ and $h_1,h_2\in H$. Taking into account that $M$ and $H$ are abelian we have $$[x,y]=[m_1h_1,m_2h_2]=[m_1,h_2]^{h_1}[h_1,m_2]^{h_2}.$$ Therefore $[x,y]$ is a product of at most two coprime commutators, whence we deduce that $|[x,y]^G|\leq n^2$. Since this holds for arbitrary $x,y\in G$, the main result of \cite{eber} implies that $G$ has a nilpotent subgroup of $n$-bounded index. In particular the Fitting subgroup $F$ of $G$ has $n$-bounded index. Moreover, since $M \le F$ and $|H \cap F|$ is coprime with $|M|$, we deduce that $H \cap F \le C_H(M)=1$. So, $H \cong G/F$ has $n$-bounded order, as claimed. 
\end{proof}

\begin{lemma}\label{simple} Suppose $G$ is a direct product of nonabelian simple groups. Then $|G|\leq n!$.
\end{lemma}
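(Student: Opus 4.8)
Write $G=S_1\times\cdots\times S_k$ with each $S_i$ a nonabelian simple group. The plan is to exhibit a single coprime commutator $c\in G$ that is centralized by no nontrivial normal subgroup of $G$. Granting this, the kernel of the conjugation action of $G$ on the conjugacy class $c^G$ is a normal subgroup of $G$ contained in $C_G(c)$, hence trivial; so $G$ embeds into the symmetric group $\mathrm{Sym}(c^G)$, and since $|c^G|\le n$ by hypothesis we conclude $|G|\le|c^G|!\le n!$.

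To build such a $c$ I would argue coordinatewise. By the Feit--Thompson theorem every nonabelian simple group has even order, so each $S_i$ contains an involution $t_i$. Also $|S_i|$ is not a power of $2$ (a $2$-group is nilpotent), so $S_i$ has a nontrivial element of odd order; as the odd-order elements of $S_i$ form a conjugation-invariant set, they generate a nontrivial normal subgroup of the simple group $S_i$, namely $S_i$ itself. Since $C_{S_i}(t_i)\neq S_i$ (because $Z(S_i)=1$), it follows that some odd-order element $s_i\in S_i$ satisfies $[t_i,s_i]\neq1$. Now put $t=(t_1,\dots,t_k)$ and $s=(s_1,\dots,s_k)$: then $t$ has order $2$ and $s$ has odd order, so $\pi(t)\cap\pi(s)=\emptyset$, while $c:=[t,s]=([t_1,s_1],\dots,[t_k,s_k])$ has every coordinate nontrivial. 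Thus $c$ is a coprime commutator of $G$.

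It remains to check that $c$ has the required property. Every normal subgroup of $G$ is a direct product of some of the factors $S_i$ (a standard fact: if $N$ is normal in $G$ and $S_i\not\le N$ then $S_i\cap N=1$, whence $[S_i,N]=1$ and every element of $N$ has trivial $i$-th coordinate). So if some nontrivial normal subgroup of $G$ centralized $c$, it would contain a factor $S_j$, which would then centralize the $j$-th coordinate $c_j$ of $c$; but $c_j\neq1$ while $Z(S_j)=1$, a contradiction.

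The one delicate point is the construction of a coprime commutator that is nontrivial in \emph{every} coordinate at once: simply taking a nontrivial coprime commutator $[a_i,b_i]$ inside each factor does not work, because then $a=(a_i)$ and $b=(b_i)$ need not have coprime orders. The ``involution versus odd-order element'' device fixes this uniformly, and the remaining steps are routine.
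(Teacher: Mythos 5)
Your proof is correct and follows essentially the same route as the paper: both construct a coprime commutator $c=[t,s]$ from a tuple of involutions and a tuple of odd-order elements chosen so that every coordinate of $c$ is nontrivial, then observe that $C_G(c)$ is core-free and has index at most $n$, forcing $|G|\le n!$. Your extra justification that a suitable odd-order $s_i$ exists (the odd-order elements generate $S_i$, which does not centralize $t_i$) is a detail the paper leaves implicit, but the argument is the same.
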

\begin{proof} Let $G=S_1 \times \dots \times S_k$, where each $S_i$ is a nonabelian simple group. By the Feit-Thompson theorem \cite{ft} each factor $S_i$ contains an involution, say $a_i$.  For every $i=1,\dots,k$ choose an odd-order element $b_i\in S_i$ such that $[a_i, b_i] \neq 1$ and set $a=(a_1, \dots , a_k)$ and $b=(b_1, \dots , b_k)$. Note that $[a,b]$ is a coprime commutator. Observe that $C=C_G([a,b])=C_{S_1}([a_1,b_1])\times\dots\times C_{S_k}([a_k,b_k])$. It follows that no simple factor $S_i$ is contained in $C$ and we conclude that $C$ is core-free, that is, the only normal subgroup of $G$ contained in $C$ is the trivial one. On the other hand, since the index of $C$ is at most $n$, the intersection of the conjugates of $C$ has index at most $n!$. Thus, we deduce that $|G|\leq n!$.
\end{proof}

Given a positive integer $e$ and a group $K$, write $K^e$ for the subgroup generated by the $e$th powers of elements of $K$. The exponent of a finite group $K$ is the minimal positive integer $e$ such that $K^e=1$. In their seminal paper \cite{ha-hi} Hall and Higman showed that a finite group of exponent $e$ possesses a normal series of $e$-bounded length all of whose quotients are either nilpotent or isomorphic to a direct product of nonabelian simple groups. This will be used in the next lemma. As usual, the Fitting subgroup of $K$ is denoted by $F(K)$.

\begin{lemma}\label{length} The  exponent of $G/F(G)$ is $n$-bounded. Moreover, the group $G$ possesses a normal series of $n$-bounded length all of whose quotients are either nilpotent or isomorphic to a direct product of nonabelian simple groups.
\end{lemma}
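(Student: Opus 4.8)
The plan is to first bound the exponent of $G/F(G)$ using the structure imposed by the previous lemmas, and then feed this bound into the Hall--Higman theorem. The key point is that an element $x \in G$ whose image in $G/F(G)$ has order coprime to something useful gives rise to coprime commutators, and the hypothesis on conjugacy class sizes will force such elements to act with small displacement. More precisely, I would work with a prime $p$ and a $p$-element $x$ of $G$, and consider its action on a $p'$-Hall-like normal section. Passing to the quotient $G/F(G)$ one may assume $F(G) = 1$; then $G$ embeds in the product of automorphism groups of the nonabelian simple chief factors of its generalized Fitting subgroup, and one wants to bound the order of the image of $x$.

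The main step: let $x$ be an element of prime-power order $p^k$ and let $M = O_{p'}(G)$ (or more generally a normal $p'$-subgroup on which $x$ acts nontrivially). Then $[M, x]$ is generated by coprime commutators $[m, x]$, and by Lemma \ref{key} applied to the subgroup $M\langle x\rangle$ (with $M$ normal, $\langle x \rangle$ a $p$-group, coprime orders after intersecting appropriately) the index $[\langle x\rangle : C_{\langle x\rangle}(M)]$ is $n$-bounded; since $\langle x \rangle$ is cyclic this says $x^{p^{k_0}}$ centralizes $M$ for some $n$-bounded $k_0$. Doing this for the (finitely many, boundedly many by Lemma \ref{simple}-type control) relevant sections and for each prime shows that a bounded power of $x$ lies in $F(G)$, i.e. the exponent of $G/F(G)$ is $n$-bounded. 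I should be careful to first reduce to a situation where $F(G)$ plays the role of the ``coprime'' part: one standard device is to note $C_G(F(G)) \le F(G)$ when $F(G) = F^*(G)$, handle the soluble obstruction separately, and use that $G/F(G)$ acts faithfully on a section built from $F(G)$ together with the socle of the layer.

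Once the exponent $e$ of $G/F(G)$ is $n$-bounded, the Hall--Higman theorem \cite{ha-hi} immediately gives a normal series of $G/F(G)$ of $e$-bounded (hence $n$-bounded) length whose factors are nilpotent or direct products of nonabelian simple groups; prepending the single term $F(G)$ (which is nilpotent) yields the desired series for $G$. The hard part will be the bookkeeping in the first step: making sure that the coprimeness hypothesis needed to invoke Lemma \ref{key} (and the well-known coprime-action facts via Lemma \ref{coprime}) is genuinely available at each section, and that the number of sections one must control is itself $n$-bounded rather than merely finite — for the latter I expect to lean on Lemma \ref{simple} together with the fact that $C_G(\operatorname{soc})$ has bounded index in the nonabelian part. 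The exponent bound for the soluble-by-(bounded many simple) quotient then follows by multiplying boundedly many $n$-bounded local exponents.
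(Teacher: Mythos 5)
Your proposal does not follow the paper's argument, and as written it has genuine gaps. The paper's proof is a three-line observation: set $e=n!$; since $[G:C_G(x)]\le n$ for every coprime commutator $x$, the normal core of $C_G(x)$ has index dividing $n!$, so $G^e$ centralizes every coprime commutator and hence centralizes $\gamma_\infty(G)$; consequently $\gamma_\infty(G^e)\le \gamma_\infty(G)\le Z(G^e)$, so the normal subgroup $G^e$ is nilpotent and therefore $G^e\le F(G)$. The Hall--Higman theorem applied to $G/F(G)$, which now has exponent dividing $n!$, gives the normal series. No coprime-action machinery, no appeal to Lemma \ref{key}, and no reduction to $F(G)=1$ or to the generalized Fitting subgroup are needed.

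There are two concrete problems with your route. First, the inference from ``a bounded power of the $p$-element $x$ centralizes the normal $p'$-subgroup(s) $M$'' to ``a bounded power of $x$ lies in $F(G)$'' is not justified, and it is false if $M$ is taken to be $O_{p'}(G)$ or $O_{p'}(F(G))$: in $G=S_4$ with $p=2$ one has $O_{2'}(G)=O_{2'}(F(G))=1$, so every $2$-element centralizes it, yet a transposition does not lie in $F(G)=V_4$. To repair this you would have to control the action of $x$ on every factor of a suitable normal series, which leads to the second problem: your plan to ``multiply boundedly many $n$-bounded local exponents'' presupposes that the number of sections to be controlled is $n$-bounded, but that is precisely (part of) what the lemma is asserting. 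Lemma \ref{simple} bounds only the nonabelian composition factors; it gives no a priori bound on the Fitting height of the soluble radical, and in the paper that bound is deduced \emph{from} Lemma \ref{length} (via Lemma \ref{soluble}), so it is not available beforehand. The bookkeeping you defer is exactly where the argument breaks down; the fix is the paper's direct observation that $G^{n!}$ centralizes $\gamma_\infty(G)$ and is therefore nilpotent.
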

\begin{proof} 
Set $e=n!$.  Note that $G^e$ centralizes each coprime commutator of $G$ and so it centralizes $\gamma_\infty(G)$. Therefore $G^e \le F(G)$. Thus, $G/F(G)$ has exponent dividing $e$.  The claim concerning the normal series in $G$ is immediate from the Hall-Higman theory.
 \end{proof}
Recall that the soluble radical of a finite group $K$ is the (unique) maximal soluble normal subgroup of $K$. If $K$ is soluble, the Fitting height $h(K)$ of $K$ is the length of the shortest normal series all of whose quotients are nilpotent. 
\begin{lemma}\label{soluble}  The soluble radical of $G$ has $n$-bounded index in $G$ and $n$-bounded Fitting height. 
\end{lemma}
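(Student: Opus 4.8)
The plan is to handle the two assertions separately, where $R$ denotes the soluble radical of $G$. Both follow quickly from Lemmas \ref{simple} and \ref{length}; the guiding idea is that Lemma \ref{simple} renders the non-soluble part of $G$ harmless, since any direct product of nonabelian simple groups occurring as a section of $G$ inherits our hypothesis and hence has order at most $n!$.

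To bound the index, I would pass to $\bar G = G/R$. As $R$ is the soluble radical, $\bar G$ has trivial soluble radical, hence trivial Fitting subgroup, so its socle $W = \mathrm{soc}(\bar G)$ is a direct product of nonabelian simple groups with $Z(W)=1$. Moreover $C_{\bar G}(W) = 1$: a nontrivial normal subgroup centralizing $W$ would contain a minimal normal subgroup of $\bar G$, which lies in $W$ and would therefore be central in $W$, a contradiction. Thus $\bar G$ embeds into $\mathrm{Aut}(W)$. Since $W$ is a subgroup of a quotient of $G$, it satisfies our hypothesis, and so Lemma \ref{simple} gives $|W| \le n!$; consequently $|\mathrm{Aut}(W)|$, and a fortiori $[G:R]=|\bar G|$, is $n$-bounded.

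To bound the Fitting height, I would take the normal series $1 = G_0 \triangleleft G_1 \triangleleft \cdots \triangleleft G_m = G$ furnished by Lemma \ref{length}, with $m$ being $n$-bounded and each factor $G_i/G_{i-1}$ either nilpotent or a direct product of nonabelian simple groups, and intersect it with $R$ to obtain the normal series $1 = R\cap G_0 \triangleleft \cdots \triangleleft R\cap G_m = R$. Its $i$-th factor embeds into $G_i/G_{i-1}$; when $G_i/G_{i-1}$ is nilpotent this factor is nilpotent, and when $G_i/G_{i-1}$ is a direct product of nonabelian simple groups it has order at most $n!$ by Lemma \ref{simple} (being a section of $G$), so the $i$-th factor of the series for $R$ has $n$-bounded order. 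In either case the Fitting height of that factor is $n$-bounded, and since the Fitting height of a finite soluble group is subadditive along a normal series, $h(R) \le \sum_{i=1}^m h\bigl((R\cap G_i)/(R\cap G_{i-1})\bigr)$ is $n$-bounded.

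I do not expect a genuine obstacle: the lemma is essentially bookkeeping on top of the two preceding ones. The points that need a little care are the verification that $C_{\bar G}(W)=1$ — which is what licenses the embedding of $\bar G$ into $\mathrm{Aut}(W)$ — and the remark that each nonabelian-simple factor arising in the Hall--Higman series is a section of $G$ and hence subject to the bound of Lemma \ref{simple}; this is exactly the feature that keeps the non-soluble structure of $G$ under control.
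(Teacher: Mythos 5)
Your proof is correct, and for the index bound it takes a genuinely different route from the paper. The paper works entirely inside the Hall--Higman series from Lemma \ref{length}: for each non-nilpotent factor $G_{i+1}/G_i$ it forms the centralizer $C_i=\{g\in G \mid [G_{i+1},g]\leq G_i\}$, bounds $[G:C_i]$ via Lemma \ref{simple} (the factor has order at most $n!$, so $C_i$ has index at most $(n!)!$), and identifies $\bigcap C_i$ with the soluble radical, getting index at most $((n!)!)^t$. You instead pass to $\bar G=G/R$, observe that $F(\bar G)=1$ forces the socle $W$ to be a direct product of nonabelian simple groups with $C_{\bar G}(W)=1$, and embed $\bar G$ into $\mathrm{Aut}(W)$ with $|W|\le n!$ by Lemma \ref{simple}. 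Both arguments rest on the same key input; yours sidesteps the (glossed-over) verification that $\bigcap C_i$ equals the soluble radical and yields the cleaner bound $[G:R]\le |\mathrm{Aut}(W)|$, while the paper's stays uniformly within the series framework it has already set up. For the Fitting height the two arguments essentially coincide --- the paper simply declares $h(R)\le s$ ``obvious,'' and your intersected series with the subadditivity of Fitting height is the justification one would supply; note that you could even observe that the factors of the intersected series sitting inside the semisimple layers are soluble normal subgroups of direct products of nonabelian simple groups and hence trivial, so in fact only the nilpotent layers contribute.
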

\begin{proof} Lemma \ref{length} tells us that the group $G$ possesses a normal series $$1=G_1<G_2<\dots<G_s=G$$ of $n$-bounded length all of whose quotients are either nilpotent or isomorphic to a direct product of nonabelian simple groups. Obviously, the Fitting height of the soluble radical is at most $s$ so we only need to show the index is $n$-bounded. Let $t$ be the number of non-nilpotent quotients in the above series. For each $i$ such that the quotient $G_{i+1}/G_i$ is non-nilpotent in the usual way define the centralizer in $G$ via $$C_i=\{g\in G\ | \ [G_{i+1},g]\leq G_i\}.$$ Lemma \ref{simple} tells us that any non-nilpotent quotient has order at most $n!$. Therefore each $C_i$ has index at most $n!!$. It is easy to see that the intersection $\cap C_i$ is precisely the soluble radical of $G$ and it has index at most $(n!!)^t$.
\end{proof}

Now we are ready to prove that if $G$ is a finite group in which $|x^G|\leq n$ whenever $x$ is a coprime commutator, then the index of the Fitting subgroup of $G$ is $n$-bounded. Throughout, for a set of primes $\pi$, we write $O_{\pi}(K)$ to denote the maximal normal $\pi$-subgroup of a group $K$.

\begin{proof}[Proof of Theorem \ref{main1}] In view of Lemma \ref{soluble} we can assume that $G$ is soluble and its Fitting height $h(G)$ is $n$-bounded. So we will use induction on $h(G)$.  By induction, $G/F(G)$ has a nilpotent subgroup of $n$-bounded index and so we can assume that $G$ is metanilpotent, that is, $h(G)=2$. We know from Lemma \ref{length}  that $G/F(G)$ has exponent dividing $n!$. In particular there are less than $n$ primes, say $p_1,\dots,p_s$ dividing the order of $G/F(G)$. For $i=1,\dots,s$ let $P_i$ be a $p_i$-Sylow subgroup of $G$. Note that the subgroups $F(G)P_i$ are normal in $G$ and their product equals $G$. 

Therefore it is sufficient to consider the case where $G=F(G)P$ for a Sylow $p$-subgroup $P\leq G$. Set $M=O_{p'}(F(G))$ and note that $G=MP$. Now an application of Lemma \ref{key} completes the proof.
\end{proof} 

As was mentioned in the introduction, under the hypotheses of Theorem \ref{main1} the order of $\gamma_\infty(G)$ can be arbitrarily large. The next theorem provides detailed information on the structure of $\gamma_\infty(G)$. 

\begin{theorem}\label{tec} Let $G$ be a finite group in which $|x^G|\leq n$ whenever $x$ is a coprime commutator. Then 
 \begin{enumerate} 
 \item\label{1} every element of $\gamma_\infty(G)$ is a product of $n$-boundedly many  coprime commutators; 
 \item\label{2} there exists a positive real number $\epsilon$, depending on $n$, such that $Pr (\gamma_\infty(G), G)\geq\epsilon$; 
 \item \label{3} there exist a normal subgroup $T$ of $n$-bounded index in $G$ and a subgroup $B$ of $n$-bounded index in $\gamma_\infty(G)$ such that $[T,B]$ has $n$-bounded order. 
 \end{enumerate}
 \end{theorem}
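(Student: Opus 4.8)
The plan is to establish assertion (1) first; assertions (2) and (3) then follow with little extra work. For (1) I would combine the reduction already used for Theorem \ref{main1} with a count of how many coprime commutators are needed to express an element of $\gamma_\infty(G)$. By Theorem \ref{main1} the Fitting subgroup $F=F(G)$ has $n$-bounded index, say $[G:F]\le t=t(n)$. The subgroup $\gamma_\infty(G)F/F$ is the nilpotent residual of $G/F$, hence is generated by the coprime commutators of $G/F$, and each coprime commutator of $G/F$ is the image of a coprime commutator of $G$, because an element of $G/F$ whose order is a $\pi$-number is the image of an element of $G$ whose order is a $\pi$-number. Since $|G/F|\le t$, Lemma \ref{finite} shows that every element of $\gamma_\infty(G)F/F$ is a product of at most $t$ images of coprime commutators of $G$. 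Therefore each $g\in\gamma_\infty(G)$ has the form $g=c_1\cdots c_k w$ with $k\le t$, each $c_i$ a coprime commutator of $G$, and $w\in R:=\gamma_\infty(G)\cap F$, and it remains to bound, in terms of $n$, the number of coprime commutators needed to express an arbitrary element of $R$.

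To deal with $R$ I would follow the scheme of the proof of Theorem \ref{main1}: using Lemma \ref{soluble} reduce to the case where $G$ is soluble of $n$-bounded Fitting height, induct on the Fitting height, and run an inner induction on the nilpotency class of $F$; this brings one to the configuration of Lemma \ref{key}, that is $G=MH$ with $M$ a normal $p'$-subgroup, $H$ a Sylow $p$-subgroup, $(|M|,|H|)=1$, and---after the further reductions in the proof of that lemma---$M$ abelian and $\gamma_\infty(G)=[M,H]$. Then, regarding $M$ additively as a module over $\mathbf{Z}[H]$, one has $[M,H]=\sum_{x\in H}(x-1)M$, and since $(x-1)M$ depends only on the coset $xC_H(M)$, every element of $[M,H]$ is a product of at most $[H:C_H(M)]$ commutators of the form $[m,x]=m^{-1}m^{x}$ with $m\in M$, $x\in H$, each of which is a coprime commutator; and $[H:C_H(M)]$ is $n$-bounded by Lemma \ref{key}.

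The principal obstacle is to make the successive quotient reductions compatible with this bookkeeping. Passing from $G$ to $G/N$ only shows that a given $g\in\gamma_\infty(G)$ is, modulo $N$, a bounded product of coprime commutators, leaving a residual factor in $\gamma_\infty(G)\cap N$ which must itself be shown to be a bounded product of coprime commutators of $G$; when $N$ is a chief factor inside $F(G)$ this amounts to showing that the pertinent part of $F(G)$---for instance $M'\cap\gamma_\infty(G)$ in the reduction to abelian $M$, or a central chief factor contained in $\gamma_\infty(G)$---has bounded coprime-commutator width, which is precisely what the inductions on the class of $F$ and on the Fitting height are designed to supply. A second delicate point is that in a metanilpotent group a coprime commutator lying in $F(G)$ is, up to a bounded number of conjugations, a product of bi-primary coprime commutators $[u,v]$ with $u$ a $p$-element and $v$ a $q$-element for distinct primes $p,q$; to write an element of $\gamma_\infty(G)\cap F(G)$ as a bounded product of such, simultaneously over all primes, one must collect the factors prime by prime---the $p$-components for distinct primes commute inside $F(G)$---so that the bound does not deteriorate with the number of prime divisors of $|\gamma_\infty(G)|$.

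Finally, granting (1), let $m=m(n)$ be such that every $x\in\gamma_\infty(G)$ is a product $x=c_1\cdots c_m$ of coprime commutators (some factors possibly trivial). Then $C_G(x)\supseteq\bigcap_{i=1}^{m}C_G(c_i)$ and $[G:C_G(c_i)]=|c_i^G|\le n$, so $|x^G|=[G:C_G(x)]\le n^m$ for every $x\in\gamma_\infty(G)$. Consequently, for each such $x$ the number of $y\in G$ with $xy=yx$ is $|C_G(x)|\ge|G|/n^m$, whence $Pr(\gamma_\infty(G),G)\ge n^{-m}$, which proves (2) with $\epsilon=n^{-m}$. Assertion (3) is now immediate from Theorem \ref{ds} applied with $K=\gamma_\infty(G)$ and this value of $\epsilon$: it furnishes a normal subgroup $T$ of $n$-bounded index in $G$ and a subgroup $B$ of $n$-bounded index in $\gamma_\infty(G)$ such that $[T,B]$ has $n$-bounded order.
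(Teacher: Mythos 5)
Your derivation of (2) from (1) and of (3) from (2) via Theorem \ref{ds} is correct and is exactly what the paper does, and the opening reduction of (1) to bounding the coprime-commutator width of elements of $\gamma_\infty(G)\cap F(G)$ (using Theorem \ref{main1}, Lemma \ref{finite} applied to $G/F$, and lifting coprime commutators along $G\to G/F$) is sound. The local computation in the fully reduced situation ($M$ abelian, $[M,H]=\sum_{x}(x-1)M$ with $(x-1)M$ depending only on $xC_H(M)$) is also fine.

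The core of (1), however, is not proved: you name the two obstacles yourself and then assert that the inductions ``are designed to supply'' their resolution, which they are not. Concretely: (a) your inner induction is on the nilpotency class of $F(G)$, a quantity that is \emph{not} $n$-bounded, so even if each step of that induction cost only boundedly many extra commutator factors the total would not be $n$-bounded; (b) the reductions inside Lemma \ref{key} --- replacing $H$ by $C_H(M')$, passing to $M/M'$, and especially replacing $H$ by an abelian subgroup via the Aivazidis--Isaacs result --- were designed to bound an \emph{index} and do not transport width statements back to $G$ (the abelian replacement in particular discards elements of $H$ that may be needed to express a given element of $[M,H]$); and (c) each quotient reduction leaves a residual factor in $\gamma_\infty(G)\cap N$ whose bounded width is the entire problem over again. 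The paper closes exactly these gaps with two tools absent from your proposal: Theorem \ref{qjm}, which shows $[O_{p'}(F),a]'$ has $n$-bounded \emph{order} (so Lemma \ref{finite}, together with the fact that this subgroup is generated by products of two coprime commutators, converts order into width, eliminating any induction on the class of $F$), and Theorem \ref{ko}, which shows that after factoring out $B=\prod_i[R_i,a_i]^G$ the Fitting subgroup becomes hypercentral and hence $\gamma_\infty(G/B)$ has $n$-bounded order, so the final residual is handled by Lemma \ref{finite} rather than by collecting factors prime by prime. Without these (or genuine substitutes) the proposed route does not go through.
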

\begin{proof} 
Note that statement (\ref{3}) is a  direct consequence of statement   (\ref{2}) and Theorem \ref{ds}.  Moreover, statement   (\ref{2}) is straightforward from  (\ref{1}). Indeed, if there exists an $n$-bounded integer $t$ such that every  element $x$ of $\gamma_\infty(G)$ is a product of at most $t$  coprime commutators, then  $x$   has at most $(n^t)$-conjugates. It follows that 
\[
 Pr (\gamma_\infty(G), G)= \sum_{x \in \gamma_\infty(G)} \frac{|C_G(x)|}{|\gamma_\infty(G)| \, |G|} \ge 1/n^t, 
\]
  as claimed in (\ref{2}). 
  
 So it suffices to show that every element of $\gamma_\infty(G)$ is a product of $n$-boundedly many  coprime commutators. 
 
Let $F=F(G)$ be the Fitting subgroup of $G$. By  Theorem \ref{main1}, the quotient $G/F$ has $n$-bounded order. Let $\pi$ be the set of primes dividing the order of $G/F$. Of course the size of $\pi$ here is $n$-bounded. Choose Sylow subgroups $P_1,\dots,P_u$, one for each prime in $\pi$.
We can pick $n$-boundedly many elements $a_1, \dots ,a_s\in\cup_{1\leq j\leq u}P_j$ such that $G=\langle a_1,\dots,a_s,F\rangle$. Thus, the elements $a_i$ are of prime-power order and those whose orders are divisible by the same prime generate a $p$-subgroup. For each $i= 1, \dots, s$ let $p_i$ the prime dividing the order of $a_i$ (so $p_i$ is not necessarily different from $p_j$ when $i\neq j$) and let $R_i=O_{p_i'}(F)$. 

Now we fix an element $a=a_i$, and, for short, write $p=p_i$ and $R=R_i$. We have $R=[R, a]C_R( a)$ and $[R,a,a] =[R,a] $.  By Theorem \ref{qjm} the commutator subgroup $[R,a]'$ has $n$-bounded order. 

Note that $R$ is normal in $G$ and $F=R \times O_p(F)$. The subgroup $[R,a]$ is normal in $R$ and it is centralized by $O_p(F)$ since $[R,a] \le O_{p'}(F)$. It follows that $F$ normalizes $[R,a]$. As $G/F$ has $n$-bounded order, we deduce that the normalizer of $[R,a]$ has $n$-bounded index in $G$. Obviously, this implies that also the normalizer of the commutator subgroup $[R,a]'$ has $n$-bounded index in $G$. 

Thus  the product, say $N_a$, of the conjugates of $[R,a]'$ is a product of $n$-boundedly many subgroups, all normal in $F$. We conclude that $N_a$ has $n$-bounded order. Moreover, note that $[R,a]$ is generated by the coprime commutators $[r,a]$ with $r\in R$, and so $[R,a]'$ is generated by products of two coprime commutators 
  $[[r,a], [s,a]]$, with $r, s \in R$. So  $N_a$ can be generated by a set of elements that are products of two coprime commutators. As $N_a$ has $n$-bounded order, it follows from Lemma \ref{finite} that every element of  $N_a$  is a product of boundedly many  coprime commutators. 

The image of $[R,a]$ in $G/N_a$ is abelian. Since $[R,a]=[R,a,a]$, using standard commutator identities deduce that every element of $[R,a]$ is the product of a coprime commutator and an element of $N_a$. In particular, every element of $[R,a]$ is the product of $n$-boundedly many  coprime commutators. Let $M_a$ denote the normal closure of $[R,a]$ in $G$. Note that $M_a$ is a product of $n$-boundedly many conjugates of $[R,a]$, which are normal subgroups of $F$. We therefore deduce that every element of  $M_a$ is a product of boundedly many  coprime commutators. 

Arguing in this manner for all $a_i$ and taking into consideration that $i\leq s$, where $s$ is  $n$-bounded, we conclude that every element of
 the  subgroup 
 \[B= \prod_{i=1, \dots, s} [R_i, a_i]^G \] 
   is a product of boundedly many  coprime commutators. 
  
Consider now the quotient group $\bar G=G/B$. In this quotient ${[\bar R_i, \bar a_i]}=1$ for every $i=1,\dots,s$. If $\bar P$ is a Sylow $p$-subgroup of $\bar F$, then $\bar G/C_{\bar G}(\bar P)$ is a $p$-group. It follows that $\bar P\leq Z_\infty(\bar G)$. This happens for all Sylow subgroups of $\bar F$ so we deduce that $\bar F \le Z_{\infty}(\bar G)$ and therefore $Z_{\infty}(\bar G)$ has $n$-bounded index in $\bar G$. 
 It follows from Theorem \ref{ko}  that $\gamma_\infty(\bar G)$ has $n$-bounded oder. 
 
 Therefore $\gamma_\infty(G)/B$ has $n$-bounded order. Now we apply Lemma \ref{finite} to deduce that  every element of $\gamma_\infty(G)$ is a product of $n$-boundedly many  coprime commutators and an element of $B$. Since we have already shown that every element of $B$ is  a product of $n$-boundedly many  coprime commutators, the same holds for the elements of $\gamma_\infty(G)$. The proof is now complete. 
 \end{proof}

\section{Anti-coprime commutators with bounded conjugacy classes}

Throughout this section,  $G$ will be  a finite group in which $|x^G|\leq n$ whenever $x$ is an anti-coprime commutator. We wish to show that $G$ has a subgroup $H$ of nilpotency class at most $4$ such that $[G : H]$ and $|\gamma_4 (H)|$ are both $n$-bounded. We state the next lemmas without proofs because they are easy modifications of the corresponding facts from the previous section.

\begin{lemma}\label{simple-anti} Suppose $G$ is a direct product of nonabelian simple groups. Then $|G|\leq n!$.
\end{lemma}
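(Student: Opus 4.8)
The plan is to follow the proof of Lemma \ref{simple} almost verbatim, replacing the coprime commutator assembled from an involution and an element of odd order by an anti-coprime commutator assembled from two $2$-elements. Write $G = S_1 \times \dots \times S_k$ with each $S_i$ a nonabelian simple group. By the Feit--Thompson theorem \cite{ft}, $2$ divides $|S_i|$ for every $i$.

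The one extra point to check, compared with Lemma \ref{simple}, is that each $S_i$ contains two non-commuting $2$-elements. Indeed, the subgroup of $S_i$ generated by all its $2$-elements is characteristic and, since $2 \mid |S_i|$, nontrivial; by simplicity it therefore equals $S_i$. As $S_i$ is nonabelian, its $2$-elements cannot pairwise commute, so we may choose $2$-elements $a_i, b_i \in S_i$ with $[a_i, b_i] \neq 1$. Now put $a = (a_1, \dots, a_k)$ and $b = (b_1, \dots, b_k)$. These are $2$-elements of $G$, so $\pi(a) = \pi(b)$ and $[a,b]$ is an anti-coprime commutator. Since the direct factors centralize one another, $[a,b] = ([a_1,b_1], \dots, [a_k,b_k])$ and
\[
C := C_G([a,b]) = C_{S_1}([a_1,b_1]) \times \dots \times C_{S_k}([a_k,b_k]).
\]
Because $[a_i,b_i] \neq 1$ and $S_i$ has trivial centre, $C_{S_i}([a_i,b_i])$ is a proper subgroup of $S_i$; hence no factor $S_i$ lies in $C$, and consequently $C$ contains no nontrivial normal subgroup of $G$, that is, $C$ is core-free.

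Finally, the hypothesis gives $[G:C] = |[a,b]^G| \leq n$, so the permutation action of $G$ on the at most $n$ cosets of $C$ is faithful and $G$ embeds into the symmetric group on at most $n$ letters; thus $|G| \leq n!$, as claimed. The only genuine obstacle relative to Lemma \ref{simple} is producing a single anti-coprime commutator that is ``spread out'' over all the simple factors, and this is precisely what the observation above — that a nonabelian simple group is generated by its $2$-elements — provides. (One could equally well work with a prime $p_i$ chosen separately in each factor, but using the prime $2$ uniformly keeps the argument parallel to the coprime case.)
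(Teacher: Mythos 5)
Your proof is correct and is exactly the ``easy modification'' of Lemma \ref{simple} that the paper has in mind (the paper omits the proof): you replace the involution/odd-order pair by two non-commuting $2$-elements in each factor, so that $\pi(a)=\pi(b)=\{2\}$ and $[a,b]$ becomes an anti-coprime commutator, and the core-free centralizer argument then goes through verbatim. The auxiliary observation that a nonabelian simple group of even order must contain two non-commuting $2$-elements is correctly justified.
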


\begin{lemma}\label{length-anti}  The  exponent of $G/F(G)$ divides  $n!$. Moreover, the soluble radical of $G$ has $n$-bounded index in $G$ and $n$-bounded Fitting height.
\end{lemma}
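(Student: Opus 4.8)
The plan is to transcribe the proofs of Lemmas~\ref{length} and~\ref{soluble}, with the commutator subgroup $G'$ playing the role that the nilpotent residual $\gamma_\infty(G)$ plays in the coprime setting. The key point making this possible is the observation, recorded in the introduction, that the anti-coprime commutators of $G$ generate $G'$ (and that $G/G'$ is abelian).

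First I would show that $G^{n!}$ centralizes every anti-coprime commutator. If $x$ is such a commutator, then $|x^G|\le n$, so the conjugation action of $G$ on the set $x^G$ gives a homomorphism of $G$ into $\mathrm{Sym}(x^G)$ whose point stabilizer is $C_G(x)$ and whose kernel is $\mathrm{core}_G(C_G(x))$. Hence $[G:\mathrm{core}_G(C_G(x))]$ divides $m!$, where $m=|x^G|\le n$, and therefore divides $n!$. Consequently $g^{n!}\in\mathrm{core}_G(C_G(x))\le C_G(x)$ for every $g\in G$, that is, $G^{n!}\le C_G(x)$. Since the anti-coprime commutators generate $G'$, this yields $G^{n!}\le C_G(G')$.

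Next I would check that $C_G(G')\le F(G)$, which gives the first assertion. Put $D=C_G(G')$, a normal subgroup of $G$. Then $D'\le D\cap G'=Z(G')$, while $D$ centralizes $G'$ and hence centralizes $Z(G')\ge D'$; thus $D$ is nilpotent of class at most $2$. Being a nilpotent normal subgroup of $G$, it lies in $F(G)$, so $G^{n!}\le F(G)$, i.e.\ the exponent of $G/F(G)$ divides $n!$.

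Finally, the statement about the soluble radical follows exactly as in the coprime case. By the Hall--Higman theorem the group $G/F(G)$, having exponent dividing $n!$, possesses a normal series of $n$-bounded length whose quotients are nilpotent or direct products of nonabelian simple groups; pulling this back to $G$ and inserting $F(G)$ at the bottom produces such a series $1=G_1<G_2<\dots<G_s=G$ with $s$ being $n$-bounded. The Fitting height of the soluble radical is then at most $s$; and for each $i$ with $G_{i+1}/G_i$ non-nilpotent, Lemma~\ref{simple-anti} gives $|G_{i+1}/G_i|\le n!$, so $C_i=\{g\in G\mid[G_{i+1},g]\le G_i\}$ has index dividing $(n!)!$ in $G$, and the intersection of the $C_i$, which is the soluble radical, has $n$-bounded index. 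The only step here that is not a word-for-word copy of the coprime argument is the verification that $C_G(G')$ is nilpotent, and this is immediate; I do not expect any genuine obstacle, which is presumably why the paper states the lemma without proof.
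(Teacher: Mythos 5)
Your proof is correct and is essentially the ``easy modification'' of Lemmas~\ref{length} and~\ref{soluble} that the paper intends, with the only genuinely new ingredient being the observation that $C_G(G')$ is nilpotent (of class at most $2$) and hence lies in $F(G)$, replacing the corresponding fact about $C_G(\gamma_\infty(G))$ in the coprime case; that step is verified correctly. No gaps.
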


Theorem  \ref{main-anti} will now be proved as a consequence of Theorem \ref{main1}. 

\begin{proof}[Proof of Theorem  \ref{main-anti}] 
 By Lemma \ref{length-anti} we can assume that $G$ is soluble with $n$-bounded Fitting height $h(G)$.
	
 First use induction on $h(G)$ to show that $G$ has a nilpotent subgroup of $n$-bounded index. The result is obvious if $h(G)=1$. Otherwise, assume by induction that $G/F(G)$ has a nilpotent subgroup of $n$-bounded index and so reduce to the case when $G$ is metanilpotent. Set $F=F(G)$. We know from Lemma \ref{length-anti} that $G/F$ has exponent dividing $n!$. In particular there are less than $n$ primes dividing the order of $G/F$. So it is sufficient to consider the case where $G=FP=O_{p'}(F)P$ for a Sylow $p$-subgroup $P\leq G$.
 
 Observe that if $[x,y]\in G$ is a coprime commutator, then one of the elements $x,y$ belongs to $O_{p'}(F)$ while the other belongs to a conjugate of $P$. Assume that $x\in O_{p'}(F)$ and $y\in P$. Lemma \ref{bul} tells us that there is an element $u\in O_{p'}(F)$ such that \[ [x,y] =[u,y,y]=[y^{-u},y]^y,\] which is an anti-coprime commutator.
 Thus, any coprime commutator in $G$ is also an anti-coprime commutator. Therefore $|g^G|\leq n$ whenever $g$ is a coprime commutator. So by Theorem \ref{main1} the group $G$ has a nilpotent normal subgroup of $n$-bounded index.

Therefore we can additionally assume that $G$ is nilpotent. Remark that in a nilpotent group every commutator is an anti-coprime commutator.  Hence, $|x^G|\le n$ for every commutator $x\in G$.  The result from \cite{eber} mentioned in the introduction now tells us that $G$ has a subgroup $H$ of nilpotency class at most $4$ such that the index $[G:H]$ and the order $|\gamma_4(H)|$ are both finite and bounded.
\end{proof}

\section{Commutators with centralizers of bounded order}

In this section we will prove Theorems \ref{cent_copr} and \ref{cent_anti}. Thus we consider finite groups in which the centralizers of nontrivial coprime commutators (anti-coprime commutators, respectively) have order at most $n$. We will need a useful lemma which implies that our hypotheses are inherited by quotients.

\begin{lemma}\label{total} \cite[Lemma 2.12]{KK}
Let $G$ be a finite group and $x\in G$. Then $|C_{G/N}(xN)| \le |C_{G}(x)|$ for every normal subgroup $N$ of $G$.
 \end{lemma}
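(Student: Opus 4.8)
The plan is to reformulate the centralizer $C_{G/N}(xN)$ inside $G$ and then compare cardinalities by the orbit--stabilizer theorem. Write $\bar G=G/N$ and $\bar x=xN$, and let $K$ be the full preimage of $C_{\bar G}(\bar x)$ under the canonical projection $G\to\bar G$. Then $K$ is a subgroup of $G$ containing $N$, and $|C_{\bar G}(\bar x)|=[K:N]$. A direct coset computation identifies $K$ explicitly: for $g\in G$ one has $gN\in C_{\bar G}(\bar x)$ exactly when $xgN=gxN$, i.e. when $x^g=g^{-1}xg$ lies in the coset $xN$. Hence $K=\{g\in G\mid x^g\in xN\}$.

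With this description in hand, I would first observe that the conjugation action of $K$ on the element $x$ keeps $x$ inside $xN$: by the very definition of $K$ we have $x^K\subseteq xN$, so $|x^K|\le|xN|=|N|$. By the orbit--stabilizer theorem this gives $[K:C_K(x)]=|x^K|\le|N|$. I would then combine this with the trivial inclusion $C_K(x)=K\cap C_G(x)\le C_G(x)$ to obtain
\[
|C_{\bar G}(\bar x)|=[K:N]=\frac{[K:C_K(x)]\cdot|C_K(x)|}{|N|}\le\frac{|N|\cdot|C_G(x)|}{|N|}=|C_G(x)|,
\]
which is exactly the claimed inequality.

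I do not expect a genuine obstacle here: the whole content of the lemma is the passage from the abstract quotient centralizer to the concrete subgroup $K=\{g\mid x^g\in xN\}$, after which the bound $|x^K|\le|N|$ is immediate and the orbit--stabilizer count does the rest. Normality of $N$ is used only to guarantee that $G/N$ is a group and that $xN$ is a well-defined coset behaving as above; no coprimeness, solubility, or other structural hypothesis on $G$ is needed, which is why the lemma applies verbatim to the coprime- and anti-coprime-commutator settings considered in this section.
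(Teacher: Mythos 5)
Your proof is correct. The paper itself gives no argument for this lemma (it simply cites \cite[Lemma 2.12]{KK}), and your reduction to the subgroup $K=\{g\in G\mid x^g\in xN\}$ followed by the orbit--stabilizer count $[K:C_K(x)]=|x^K|\le|N|$ is exactly the standard counting argument behind the cited result; every step checks out.
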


The following theorem is due to Hartley. 
 
\begin{theorem}\cite[Theorem A]{H}\label{H-thmA}
If $G$ is a finite group containing an element $x$ with $|C_G (x)| \le n$, then the index of the soluble radical in $G$ is $n$-bounded.\end{theorem}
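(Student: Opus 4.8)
The plan is to reduce to a group with trivial soluble radical and then bound its socle using the classification of finite simple groups. First I would pass to the quotient $\bar G = G/R$, where $R$ denotes the soluble radical of $G$. By Lemma \ref{total} the image $\bar x$ of $x$ satisfies $|C_{\bar G}(\bar x)| \le |C_G(x)| \le n$, while the soluble radical of $\bar G$ is trivial. Since $[G:R] = |\bar G|$, it suffices to show that a finite group $G$ with trivial soluble radical containing an element $x$ with $|C_G(x)| \le n$ has $n$-bounded order. In this situation the Fitting subgroup is trivial, so the generalized Fitting subgroup $N = F^*(G)$ coincides with the socle and is a direct product $N = S_1 \times \cdots \times S_k$ of nonabelian simple groups, with $C_G(N) = 1$; consequently $G$ embeds into $\mathrm{Aut}(N)$, which permutes the factors $S_i$. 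It then remains to bound $k$ and each $|S_i|$, whence $|G| \le |\mathrm{Aut}(N)|$ is $n$-bounded.

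Next I would analyse the action of $x$ on the set $\{S_1,\dots,S_k\}$. Writing the $\langle x\rangle$-orbits as $O_1,\dots,O_r$ of lengths $\ell_1,\dots,\ell_r$, a direct computation shows that the fixed-point subgroup of $x$ on the product of the factors in a single orbit $O_j$ is isomorphic, through a diagonal embedding, to $C_S(\beta_j)$, where $S$ is a factor in $O_j$ and $\beta_j \in \mathrm{Aut}(S)$ is the automorphism induced by the ``first return map'' $x^{\ell_j}$. The first key input, a consequence of the classification via the nonexistence of nontrivial fixed-point-free automorphisms of nonabelian simple groups, is that $|C_S(\beta_j)| \ge 2$ for every $j$. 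Since $C_N(x) \le C_G(x)$ is the direct product of these orbit contributions, we get $2^r \le |C_N(x)| \le n$, so $r \le \log_2 n$. On the other hand, the permutation of the factors induced by $x$ has order dividing $|x|$, and $\langle x\rangle \le C_G(x)$ forces $|x| \le n$; as every orbit length divides this order, each $\ell_j \le n$. Hence $k = \sum_j \ell_j \le rn$ is $n$-bounded.

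Finally I would bound each factor. Fix $S = S_i$, lying in some orbit $O_j$, so that $S$ carries an automorphism $\beta_j$ with $|C_S(\beta_j)| \le |C_N(x)| \le n$ and $|\beta_j|$ dividing $|x| \le n$. The second, and deepest, ingredient is the classification-dependent fact that a nonabelian simple group admitting an automorphism of $n$-bounded order with centralizer of $n$-bounded order must itself have $n$-bounded order; this is exactly where the field-automorphism families, such as $\mathrm{PSL}_2(2^r)$ with its Frobenius automorphism (whose centralizer stays bounded while the order of the automorphism grows), make it essential to control $|\beta_j|$ as well as $|C_S(\beta_j)|$. Granting this, each $|S_i|$ is $n$-bounded, so $|N| = \prod_i |S_i|$ is a product of $n$-boundedly many $n$-bounded factors and hence is $n$-bounded, and therefore so is $|G| \le |\mathrm{Aut}(N)|$. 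The main obstacle is precisely this last simple-group estimate: the orbit-counting is elementary, but the reduction genuinely rests on case-by-case information about automorphisms of the finite simple groups.
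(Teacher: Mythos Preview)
The paper does not prove this theorem at all: it is quoted verbatim as Theorem~A of Hartley~\cite{H} and used as a black box in the proof of Lemma~\ref{reduction:copr}. So there is no ``paper's own proof'' to compare against; you are in effect sketching Hartley's original argument.

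As a sketch of Hartley's result your outline is along the right lines and essentially matches the standard approach: reduce modulo the soluble radical, embed $G$ into $\mathrm{Aut}(F^*(G))$ with $F^*(G)=S_1\times\cdots\times S_k$, bound the number of $\langle x\rangle$-orbits on the factors via $|C_N(x)|\le n$, bound each orbit length via $|x|\le |C_G(x)|\le n$, and finally bound each simple factor. The two classification-dependent ingredients you isolate are exactly the ones needed: that no automorphism of a nonabelian finite simple group is fixed-point-free (so each orbit contributes at least a factor $2$ to $|C_N(x)|$), and that a nonabelian finite simple group admitting an automorphism of order at most $n$ with centralizer of order at most $n$ has $n$-bounded order. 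Your observation about $\mathrm{PSL}_2(2^r)$ is apt: it shows why the bound on $|\beta_j|$, coming from $|x|\le n$, is genuinely required. None of this is wrong, but you should be aware that the second ingredient is itself the hard core of Hartley's paper and is not something one proves in passing; your write-up correctly flags it as the main obstacle rather than claiming to dispatch it.
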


We will also need some tools from \cite{DMS_small}, suitably adapted to our hypotheses. For the reader's convenience we include the proofs.

\begin{lemma}\label{abel} 
Let $x$ be an element a group $G$ such that  $|C_G(x)|\le n$. If $x$ is contained in an abelian normal subgroup  $A$ of $G$, then $|G|$ is $n$-bounded.  
\end{lemma}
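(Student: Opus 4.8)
The plan is to exploit the action of $G$ on the abelian normal subgroup $A$ containing $x$, using the fact that $C_A(x)$, hence $C_G(x)\cap A$, is small. First I would observe that $A$ is abelian, so $A\le C_G(x')$ for every $x'\in x^G$; in particular, if $x$ has $m$ conjugates inside $A$, then $A$ centralizes all of them, so $A\le \bigcap_{i} C_G(x_i)$ where the $x_i$ run over the conjugates of $x$ lying in $A$. Since each $C_G(x_i)$ has order at most $n$, this already forces $|A|\le n$. So the real content is bounding $[G:A]$, equivalently bounding $|G/A|$ in terms of $n$.

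Next I would pass to the quotient $\bar G=G/C_A(x)$. By Lemma \ref{total} the centralizer condition is inherited, and $C_A(x)$ has order at most $n$ (it is contained in $C_G(x)$), so it suffices to bound $|\bar G|$. In $\bar G$ the image $\bar A$ of $A$ is an abelian normal subgroup on which $\bar x$ acts (by conjugation) fixed-point-freely, i.e. $C_{\bar A}(\bar x)=1$. Here the key classical fact is that an element acting fixed-point-freely on an abelian group has order bounded by $|\bar A|$ — more precisely, $\bar x$ together with $\bar A$ generates a Frobenius-like configuration, and in any case $\langle \bar x\rangle$ embeds into the holomorph of $\bar A$ acting without fixed points, which is enough to make the order of $\bar x$ at most $|\bar A|\le n$. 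Combined with $|C_{\bar G}(\bar x)|\le n$, we get $[\bar G:\langle \bar x\rangle^{\bar G}\cdots]$ bounded; more directly, $|\bar A|\le n$ and $|C_{\bar G}(\bar x)|\le n$ give $[\bar G:\bar A]=|\bar x^{\bar G}\cdot(\text{stuff})|$ — I would argue that $\bar A C_{\bar G}(\bar x)$ has $n$-bounded order and $\bar x$ lies in it, while $[\bar G:C_{\bar G}(\bar x)]=|\bar x^{\bar G}|\le|\bar G|$ is not yet bounded, so one more step is needed.

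The cleanest route for that last step: in $\bar G$, consider $N=\langle \bar x^{\bar G}\rangle$, the normal closure. Every conjugate of $\bar x$ has order equal to $o(\bar x)\le |\bar A|\le n$, and centralizes $\bar A$ only trivially on the $\bar A$-part; but crucially each conjugate $\bar x^g$ satisfies $|C_{\bar G}(\bar x^g)|\le n$. Intersecting: $\bar A\cap N$ is normalized by everything, and I claim $[\bar G:C_{\bar G}(\bar x)]$ can be controlled because $\bar x$ acts on the small group $\bar A$, so the kernel $C_{\bar G}(\bar A)$ of the action of $\bar G$ on $\bar A$ has index at most $|\mathrm{Aut}(\bar A)|$, which is $n$-bounded since $|\bar A|\le n$; and $C_{\bar G}(\bar A)\cap C_{\bar G}(\bar x)\supseteq$ — wait, better: $\bar x\notin C_{\bar G}(\bar A)$ in general, so instead take $C_{\bar G}(\bar A)$, which has $n$-bounded index, and note $\bar x$ acts on it. Hmm — the right reduction is: since $|\bar A|\le n$, $\mathrm{Aut}(\bar A)$ is $n$-bounded, so $C_{\bar G}(\bar A)$ has $n$-bounded index in $\bar G$; replacing $\bar G$ by $C_{\bar G}(\bar A)\langle\bar x\rangle$ costs only an $n$-bounded factor, and there $\bar A\le Z(\cdot)$ while $C_{\bar A}(\bar x)=1$ forces $\bar A=1$, hence $A=C_A(x)$ has order $\le n$ and we are back to bounding $[G:A]$ with $A$ central — at which point $G/C_G(x)$ has order $\le$ (number of conjugates) and $C_G(x)\le n$; but $G/A$ abelian-by-bounded and $A$ central and small reduces via $Z_\infty$ and Theorem \ref{ko}-type arguments, or more simply: if $A\le Z(G)$ then $C_G(x)=G$, contradiction unless $|G|\le n$ directly.

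I expect the main obstacle to be organizing these reductions so that no circularity creeps in and each descent genuinely shrinks the problem by an $n$-bounded factor; the delicate point is the passage from ``$\bar x$ fixed-point-free on abelian $\bar A$'' to an actual numerical bound, which I would handle by noting that the semidirect product $\bar A\rtimes\langle\bar x\rangle$ is then a Frobenius group with kernel $\bar A$, so $o(\bar x)$ divides $|\bar A|-1$ (when $\bar A$ is an elementary abelian $p$-group, and in general $o(\bar x)\le|\bar A|$), giving $o(\bar x)$ and hence the relevant index its $n$-bound. Once every layer is $n$-bounded, multiplying the bounded indices yields $|G|$ $n$-bounded.
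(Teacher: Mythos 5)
The central step of your argument is built on a false premise. You pass to $\bar G=G/C_A(x)$ and assert that $\bar x$ acts fixed-point-freely on $\bar A$, i.e.\ $C_{\bar A}(\bar x)=1$, and then run Frobenius-type reasoning to bound $o(\bar x)$. But the hypothesis is that $x$ \emph{lies in} the abelian normal subgroup $A$, so $x$ centralizes all of $A$: we have $C_A(x)=A$, hence $\bar A=1$ and $\bar x=1$, and there is no action to speak of. The entire discussion of fixed-point-free actions, Frobenius kernels, and $o(\bar x)$ dividing $|\bar A|-1$ is vacuous here, and the subsequent chain of reductions (paragraphs two through four) collapses with it. The symptom that something is off is already visible in your opening sentence, where you call $C_A(x)$ ``small'' --- it equals $A$, and its smallness is exactly your (correct) first observation $|A|\le n$, not a separate fact to exploit via a quotient.

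The correct argument is a two-line strengthening of your first paragraph, and you in fact brush against both halves of it without assembling them. Since $A$ is abelian and $x\in A$, you get not just $A\le C_G(x)$ but $A\le C_G(A)\le C_G(x)$, so \emph{both} $|A|\le n$ and $|C_G(A)|\le n$. Then, as you note near the end, $G/C_G(A)$ embeds in $\operatorname{Aut}(A)$, whose order is at most $|A|!\le n!$. Hence $|G|=|C_G(A)|\cdot[G:C_G(A)]\le n\cdot n!$, which is the paper's proof. No quotients, no induction, and no descent through layers are needed; the only missing idea in your writeup is the observation that $C_G(A)$ itself is already contained in $C_G(x)$ and is therefore the bounded normal-ish piece you were hunting for.
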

\begin{proof}
 We have $A \le C_G(A) \le C_G(x)$. Therefore $A$ and $C_G(A)$ have at most $n$ elements. Since $G/C_G(A)$ admits a natural embedding in the group of automorphisms of $A$, the order of  $G/C_G(A)$ is bounded by $n!$. Thus $|G| \le n \cdot {n!}$.  
\end{proof}

We will first deal with groups in which the centralizers of nontrivial coprime commutators have order at most $n$. 

\begin{lemma}\label{reduction:copr}
 Let $G$ be a finite group in which the centralizers of nontrivial coprime commutators have order at most $n$. Then $G$ has a nilpotent normal subgroup of $n$-bounded index which contains no nontrivial coprime commutators.
\end{lemma}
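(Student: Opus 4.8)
The plan is to combine the structural information already obtained in Theorem \ref{main1} with Hartley's theorem and the auxiliary Lemma \ref{abel}. By Theorem \ref{H-thmA}, applied to any nontrivial coprime commutator $x$ (if there is one — if there is none, then $\gamma_\infty(G)=1$, $G$ is nilpotent and there is nothing to prove), the soluble radical $R$ of $G$ has $n$-bounded index. On the other hand, Theorem \ref{main1} gives a nilpotent subgroup of $n$-bounded index; intersecting with $R$ and passing to its normal core, I may assume $G$ is soluble at the cost of an $n$-bounded-index reduction, and in fact I will aim to produce a \emph{nilpotent normal} subgroup of $n$-bounded index. So first I would fix a nilpotent normal subgroup $K$ of $n$-bounded index in $G$ (obtained from Theorem \ref{main1} by taking the normal core of the given nilpotent subgroup, which is still nilpotent and of $n$-bounded index).

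The remaining task is to pass from $K$ to a (possibly smaller) nilpotent normal subgroup of $n$-bounded index that contains no nontrivial coprime commutator. Since coprime commutators generate $\gamma_\infty(G)$, a subgroup contains no nontrivial coprime commutator precisely when it meets $\gamma_\infty(G)$ trivially — more precisely, I want a nilpotent normal subgroup $H$ with $H\cap\gamma_\infty(G)=1$. The key point is that $\gamma_\infty(G)$ itself should have $n$-bounded order. To see this, note that any nontrivial element $z$ of $\gamma_\infty(G)$ that happens to be a coprime commutator has $|C_G(z)|\le n$, hence $|z^G|=[G:C_G(z)]\ge |G|/n$; but combined with Theorem \ref{tec}\eqref{1}, every element of $\gamma_\infty(G)$ is a product of $n$-boundedly many coprime commutators, so $\gamma_\infty(G)$ is contained in an $n$-bounded union of conjugacy classes — this does not immediately bound $|\gamma_\infty(G)|$, so instead I would argue as follows: a single nontrivial coprime commutator $x$ has $|C_G(x)|\le n$, and $C_{\gamma_\infty(G)}(x)\le C_G(x)$ has order at most $n$; since $\gamma_\infty(G)$ is nilpotent (being contained in $K$, up to the bounded reduction), an element with small centralizer in a nilpotent group forces the group to be small. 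Concretely, if $x\in\gamma_\infty(G)$ is a nontrivial coprime commutator and $\gamma_\infty(G)$ is nilpotent, then $C_{\gamma_\infty(G)}(x)$ has $n$-bounded order, and in a nilpotent group $x$ is a $w$-value with small centralizer, so Lemma \ref{abel} applied inside a suitable abelian normal subgroup containing $x$ (e.g. intersect with the centre of the relevant Sylow factor) bounds the order of that Sylow factor, hence of $\gamma_\infty(G)$.

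Once $|\gamma_\infty(G)|$ is $n$-bounded, the centralizer $C_G(\gamma_\infty(G))$ has $n$-bounded index in $G$, and $L:=K\cap C_G(\gamma_\infty(G))$ is a nilpotent normal subgroup of $n$-bounded index in $G$ in which $\gamma_\infty(G)$ is central. Then $\gamma_\infty(G)\cap L$ is a central, hence normal, subgroup; I would like to split it off. Since $L$ is nilpotent with $\gamma_\infty(L)\le\gamma_\infty(G)$ of $n$-bounded order and central in $L$, the subgroup $L$ is nilpotent of class at most $2$ relative to $\gamma_\infty(G)$, and one can find inside $L$ a normal subgroup $H$ of $n$-bounded index meeting $\gamma_\infty(G)$ trivially: write $L=\gamma_\infty(G)_p$-part by $p'$-part etc., or more simply take $H$ to be the subgroup of $L$ generated by all Sylow subgroups corresponding to primes not dividing $|\gamma_\infty(G)|$ together with complements to the (bounded) $p$-parts of $\gamma_\infty(G)$; such a complement exists in each relevant abelian Sylow factor by the structure of finite abelian groups, at the cost of $n$-bounded index. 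This $H$ is nilpotent, normal in $G$ (being characteristic-ish / normal after taking cores), of $n$-bounded index, and $H\cap\gamma_\infty(G)=1$, so $H$ contains no nontrivial coprime commutator.

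The main obstacle I anticipate is the splitting step: extracting from the nilpotent normal subgroup $L$ a normal subgroup of $n$-bounded index that genuinely avoids $\gamma_\infty(G)$, rather than merely centralizing it. This requires care because a complement to a central subgroup need not be normal in the ambient group $G$; the fix is to work prime-by-prime (the Sylow subgroups of a nilpotent group are characteristic), bound the $p$-part of $\gamma_\infty(G)$ for the finitely many relevant primes $p$, and within each such Sylow $p$-subgroup $S$ of $L$ use that $\gamma_\infty(G)\cap S$ is central of bounded order together with a counting/Schur–Zassenhaus-type argument to find a $G$-invariant subgroup of bounded index in $S$ trivially intersecting it — or, alternatively, simply absorb the bounded obstruction into the index bound by replacing $H$ with the core of an honest complement. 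The verification that the resulting $H$ contains no nontrivial coprime commutator is then immediate since all coprime commutators lie in $\gamma_\infty(G)$.
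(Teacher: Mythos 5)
Your proposal rests on a misreading of the hypotheses, and this is fatal to the whole construction. Theorem \ref{main1} and Theorem \ref{tec} assume $|x^G|\leq n$ for every coprime commutator $x$, i.e.\ that such elements have \emph{large} centralizers (of index at most $n$). The present lemma assumes the opposite: $|C_G(x)|\leq n$, so every nontrivial coprime commutator has a conjugacy class of size at least $|G|/n$. Neither Theorem \ref{main1} nor Theorem \ref{tec} is applicable here, so the nilpotent normal subgroup $K$ of $n$-bounded index on which your entire argument is built is simply not available at this stage --- producing such a subgroup is essentially what the lemma asks you to prove. The later steps inherit this gap and add new ones: your bound on $|\gamma_\infty(G)|$ uses that $\gamma_\infty(G)$ is nilpotent ``being contained in $K$'' (which does not follow even if $K$ existed, since $G/K$ need not be nilpotent), and your appeal to Lemma \ref{abel} is unjustified because that lemma requires the element with small centralizer to lie in an \emph{abelian normal} subgroup of the ambient group, which you never arrange --- an element of a nilpotent group need not lie in any abelian normal subgroup. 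Finally, the splitting step ($H\cap\gamma_\infty(G)=1$ via complements in Sylow factors) is only sketched and, as you yourself note, problematic.

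The paper's actual proof is short and bypasses all of this. Apply Hartley's Theorem \ref{H-thmA} to a single nontrivial coprime commutator to get a soluble normal subgroup $H$ of $n$-bounded index; take a characteristic series $1=A_0\le A_1\le\dots\le A_t=H$ with abelian sections (the derived series); let $i$ be maximal such that $A_i$ contains no nontrivial coprime commutator, so that $A_i$ is nilpotent. If $i<t$, then $A_{i+1}/A_i$ is an abelian normal subgroup of $G/A_i$ containing a nontrivial coprime commutator whose centralizer in $G/A_i$ has order at most $n$ by Lemma \ref{total}, and Lemma \ref{abel} bounds $|G/A_i|$; either way $A_i$ is the desired subgroup. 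Note how the abelian normal subgroup needed for Lemma \ref{abel} is supplied by the derived series of the soluble radical, not by any nilpotency of $\gamma_\infty(G)$, and no conjugacy-class bound is ever needed.
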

\begin{proof}
 By Theorem \ref{H-thmA},  $G$ has a soluble normal subgroup $H$ of $n$-bounded index. 
 Let 
 \[ 1=A_0 \le A_1 \le \dots \le A_t=H\] 
  be a characteristic series of $H$ with the property that each section $A_{i+1}/A_i$ is abelian (e.g. the derived series of $H$). Let $i$ be the greatest integer such that $A_i$ contains no nontrivial coprime commutators. Note that $A_i$ is nilpotent.

If $i=t$, then the result holds since $A_t=H$ has $n$-bounded index in $G$. Otherwise, $i < t$ and, by the maximality of $i$, the subgroup $A_{i+1}$ contains a 
nontrivial coprime commutator $x$. By Lemma  \ref{total}, the centralizer of the element $xA_i$ in the quotient group $G/A_i$ has order at most $n$. It follows from Lemma \ref{abel}  that $|G/A_i|$ is $n$-bounded, as desired.
\end{proof}

\begin{proof}[Proof of Theorem  \ref{cent_copr}] By Lemma \ref{reduction:copr},  $G$ has a nilpotent normal subgroup $N$ of $n$-bounded index which contains no nontrivial coprime commutators. Observe that $N\le Z_\infty(G)$. Indeed, let $P$ be a Sylow $p$-subgroup of $N$. As $P$ is a normal subgroup of $G$ containing no nontrivial coprime commutators, $C_G(P)$ contains every $p'$-element of $G$ and so $G/C_G(P)$ is a $p$-group. Therefore $P\leq Z_\infty(G)$. Since this happens for every Sylow subgroup of $N$, it follows that  $N\le Z_\infty(G)$.

As $N$ has $n$-bounded index in $G$, the same holds for $Z_\infty(G)$. Therefore, by  Theorem \ref{ko},  we get that $\gamma_\infty(G)$ has $n$-bounded oder. 
If $\gamma_\infty(G)=1$ then $G$ is nilpotent. Otherwise, let $x\in G$  be a nontrivial coprime commutator and let $C=C_G(\gamma_\infty(G))$. As $C \le C_G(x)$, the order of $C$ is at most $n$. Since $G/C$ embeds into the   group of automorphisms of $\gamma_\infty(G)$,  the order of $G/C$ is bounded in terms of $|\gamma_\infty(G)|$, which in turn is $n$-bounded. Thus $|G|$ is  $n$-bounded.
\end{proof}

Now we turn to finite groups  in which the centralizers of nontrivial anti-coprime commutators have order at most $n$. 

\begin{proof}[Proof of Theorem  \ref{cent_anti}] Let $G$ be a finite group in which the centralizers of nontrivial anti-coprime commutators have order at most $n$. We want to prove that either $G$ is abelian or the order of $G$ is $n$-bounded.  Note that all values in $G$ of the $2$-Engel word are anti-coprime commutators, as $[x,y,y]=[y^{-x},y]^y$. It follows from  \cite{DMS_small} that either $G$ has $n$-bounded order or it is a $2$-Engel group. In the latter case $G$ is nilpotent and so every commutator  in $G$ is an anti-coprime commutator. Now we deduce from \cite{DMS_small} that either $G$ is abelian or the order of $G$ is $n$-bounded. This concludes the proof.
\end{proof}

\end{document}